\renewcommand{\deg}{\operatorname{deg}}
\renewcommand{\min}{\operatorname{min}}
\renewcommand{\max}{\operatorname{max}}
\newtheorem{theorem}{Theorem}[section]
\newtheorem{lemma}[theorem]{Lemma}
\newtheorem{corollary}[theorem]{Corollary}
\newtheorem{proposition}[theorem]{Proposition}
\newtheorem{defn}[theorem]{Definition}
\theoremstyle{definition}
\newtheorem{remark}[theorem]{Remark}
\newtheorem{example}[theorem]{Example}
\newcommand{\PE}{\mathbb{P}(E)}
\newcommand{\Vol}{\mathrm{Vol}}
\newcommand{\blup}{\mathrm{Bl}_{\mathbb{P}(E/E_{1})}\mathbb{P}(E)}
\newcommand{\PEE}{\mathbb{P}(E/E_{1})}
\newcommand{\rHE}{\rho^{*}H_{E}}
\newcommand{\rF}{\rho^{*}F}
\newcommand{\N}{N_{\mathbb{P}(E/E_{1})/\PE}}
\newcommand{\mumin}{\mu_{\min}}
\newcommand{\mumax}{\mu_{\max}}
\numberwithin{equation}{section}
\author{Houari Benammar Ammar, Louis Massonnet, Chenxi Yin}
\date{}
\title{Delta-invariant for projective bundles over a curve and K-semistability}
\begin{document}

\begin{CJK}{UTF8}{min}

\maketitle

\begin{abstract}
Consider $E$ a vector bundle over a smooth curve $C$. We compute the $\delta$-invariant of all ample ($\mathbb{Q}$-)line bundles on $\mathbb{P}(E)$ when $E$ is strictly Mumford semistable. We also investigate the case when one assumes that the Harder–Narasimhan filtration of $E$ has only one step.
\end{abstract}

\section{Introduction}

This paper is about $\delta-$invariant on a projective bundle over a smooth curve.

The $\delta-$invariant is defined for a big ($\mathbb{Q-}$)line bundle $L$ on a klt projective variety. When $L$ is ample, this invariant contains crucial information related to K-stability, a stability notion introduced in \cite{TianKEpositive1997} and refined in \cite{Donaldsonstability2002}. K-stability is conjectured to be equivalent to the existence of canonical metrics like constant scalar curvature Kähler metrics (CSC Kähler metrics) inside the class $c_{1}(L)$. Especially, we have the following famous result from \cite{BlumXupolydegeneration2019,Fujitavaluative2019, ChiLivolumemin2017}.

\begin{theorem}[\cite{BlumXupolydegeneration2019,Fujitavaluative2019,ChiLivolumemin2017}]
Let \( X \) be a klt Fano variety. The following hold:
\begin{enumerate}
    \item \( (X,-K_{X}) \) is uniform K-stable if and only if \( \delta(X,-K_{X})>1 \). 
    \item \( (X,-K_{X}) \) is K-semistable if and only if \( \delta(X,-K_{X})\geq 1 \).
\end{enumerate}
\end{theorem}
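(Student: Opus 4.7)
The plan is to reduce both claims to the valuative criterion for K-stability, passing between test configurations and divisorial valuations.

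Recall that for an ample $\mathbb{Q}$-line bundle $L$ on a klt variety $X$,
$$\delta(X,L) \;=\; \inf_E \frac{A_X(E)}{S_L(E)},$$
with the infimum taken over divisorial valuations $E$ over $X$, $A_X(E)$ denoting the log discrepancy, and
$$S_L(E) \;=\; \frac{1}{\mathrm{vol}(L)} \int_0^\infty \mathrm{vol}(L - tE)\, dt$$
the expected order of vanishing along $E$. The starting point is the Fujita--Li correspondence: to every divisorial valuation $E$ over a Fano $X$ one attaches a special test configuration of $(X,-K_X)$ whose generalized Futaki invariant is proportional to $A_X(E) - S_{-K_X}(E)$, and conversely the central fiber of a special test configuration produces a divisorial valuation.

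For (2), this dictionary converts K-semistability into the numerical inequality $A_X(E) \geq S_{-K_X}(E)$ for every divisorial valuation $E$, which after dividing and taking the infimum is exactly $\delta \geq 1$. The nontrivial input is that, for purposes of testing K-semistability, one may restrict to special test configurations; this follows from the MMP-based special degeneration of Li--Xu.

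For (1), the forward direction ``uniform K-stable $\Rightarrow \delta > 1$'' is Fujita's quantitative version of the same correspondence: a uniform lower bound of the Donaldson--Futaki invariant by a positive multiple of the $J^{\mathrm{NA}}$-norm translates into $A_X(E) \geq (1+\epsilon)\, S_{-K_X}(E)$ for some fixed $\epsilon > 0$, hence $\delta \geq 1+\epsilon > 1$. The converse is deeper and is due to Blum--Xu: one must promote the strict inequality $\delta > 1$ to a uniform bound. This is done by showing that the infimum in the definition of $\delta$ is attained, by a quasi-monomial valuation arising as a log canonical place of some complement; once attainment is known, $\delta > 1$ automatically gives $\delta \geq 1 + \epsilon$, which feeds back into the valuative criterion to yield uniform K-stability. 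The main obstacle I expect is precisely this Blum--Xu attainment theorem for $\delta$: it combines Birkar's boundedness of complements with ACC for log canonical thresholds and a careful specialization argument in the space of quasi-monomial valuations, and it is the step that genuinely exploits the Fano hypothesis rather than the formal machinery of the $\delta$-invariant alone.
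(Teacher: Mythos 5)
This statement is quoted in the paper as background from \cite{BlumXupolydegeneration2019,Fujitavaluative2019,ChiLivolumemin2017}; the paper gives no proof of it, so your proposal can only be measured against the arguments in those references. Your outline of part (2) and of the forward direction of part (1) is broadly the right shape: K-semistability is tested on special test configurations by the Li--Xu MMP degeneration, the Futaki invariant of a special test configuration is a positive multiple of $A_X(E)-S_{-K_X}(E)$ for the divisorial valuation $E$ read off from the central fiber, and conversely a divisorial valuation is converted into (a sequence of) test configurations --- though here you should be careful that only a \emph{dreamy} divisor directly yields a test configuration; for a general divisorial valuation one approximates via the induced filtration of the section ring, which is exactly Fujita's computation.

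The genuine gap is in your treatment of the converse of (1). You claim that passing from $\delta>1$ to uniform K-stability requires ``promoting the strict inequality to a uniform bound'' via the Blum--Xu/Blum--Jonsson attainment theorem for $\delta$. But $\delta$ is \emph{defined} as $\inf_E A_X(E)/S_{-K_X}(E)$, so $\delta>1$ already is the uniform valuative bound $A_X(E)\geq \delta\, S_{-K_X}(E)$ for every $E$, with the fixed constant $\delta$; no existence of a minimizer is needed, and indeed the equivalence ``uniformly K-stable $\Leftrightarrow\delta>1$'' in Fujita and Blum--Jonsson does not use attainment. The attainment of $\delta$ by a quasi-monomial lc place of a complement is the key input for genuinely different statements (e.g.\ that K-stability implies uniform K-stability, or the analysis at the boundary case $\delta=1$), so invoking it here misidentifies where the difficulty lies. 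The actual content of ``$\delta>1\Rightarrow$ uniformly K-stable'' is the translation of the valuative bound into a bound $\mathrm{DF}(\mathcal{X},\mathcal{L})\geq\varepsilon\, J^{\mathrm{NA}}(\mathcal{X},\mathcal{L})$ on (special) test configurations: one expresses the Futaki invariant through $A-S$ of the valuation attached to the central fiber and compares $S$ with the non-Archimedean $J$-functional (or minimum norm) up to dimensional constants, obtaining $\varepsilon$ depending only on $\delta$ and $\dim X$. As written, your proposal omits this comparison step entirely and replaces it with machinery that is not needed for this theorem.
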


The invariant $\delta(X,-K_{X})$ has been investigated in detail when $X$ is Fano. We can only mention a few studies here. For Fano threefolds, an exhaustive reference is \cite{calabithree}. For projective bundle of Fano type, we refer to the paper of Zhang and Zhou \cite{ZhangZhouDeltabundle2022}. In the important paper \cite{abbanzhuang}, Abban and Zhuang develop an inductive method of computing $\delta-$invariant based on inversion of adjunction, and use this method to show that a large family of hypersurfaces in projective spaces are $K-$stable.

In recent years, the research on $\delta(X,L)$ for ample $L$ has been active. In \cite{Zhangcontinuitydelta2021}, Zhang introduces an analytic version for the $\delta-$invariant, namely $\delta^{A}$, which is defined as the coercivity threshold of the entropy functional in Kähler geometry. Then Zhang shows in the subsequent paper \cite{Zhangquantization2024} that, 

\begin{theorem}[\cite{Zhangquantization2024}]\label{zhangquantization2024}
    The equality $\delta(X,L) = \delta^{A}(X,L)$ holds for any ample line bundle \( L \).
\end{theorem}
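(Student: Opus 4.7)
The plan is to establish the equality by a quantization procedure on both sides, and then match the quantum invariants in the limit.

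On the algebraic side, by Fujita--Odaka and Blum--Jonsson one has the identity
$$\delta(X,L) = \lim_{m\to\infty}\delta_m(X,L),$$
where $\delta_m(X,L)$ is the infimum of $\mathrm{lct}(X;D)$ taken over all $m$-basis type $\mathbb{Q}$-divisors $D$ of $L$. On the analytic side, I would introduce a finite-dimensional analogue $\delta_m^{A}(X,L)$, defined as the coercivity threshold of a quantized entropy functional on the space of Bergman metrics $\mathcal{B}_m \cong \mathrm{GL}(N_m,\mathbb{C})/U(N_m)$ attached to $H^{0}(X,mL)$, with $N_m = h^{0}(X,mL)$.

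The first key step is the finite-level identification $\delta_m(X,L) = \delta_m^{A}(X,L)$. This is essentially finite-dimensional convex geometry: the radial behaviour of the quantized entropy along a one-parameter subgroup of $\mathrm{GL}(N_m,\mathbb{C})$ is controlled, via simultaneous diagonalization of a basis of $H^{0}(X,mL)$, by the log canonical threshold of the associated $m$-basis type divisor. Consequently the quantum coercivity threshold reduces to the algebraic infimum defining $\delta_m$. The equivariant nature of the computation makes this step the more tractable of the two.

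The second step, which I expect to be the main obstacle, is the convergence $\delta_m^{A}(X,L) \to \delta^{A}(X,L)$ as $m\to\infty$. This requires a robust Bergman-approximation theorem showing that the quantized entropy functionals $\Gamma$-converge to the Mabuchi entropy functional, in the spirit of Berman--Boucksom--Jonsson and Darvas, and that coercivity thresholds pass to the limit. The delicate point is \emph{not losing coercivity in the limit}: one must control the entropy uniformly along geodesic rays in the space of K\"ahler potentials, invoke pluripotential-theoretic compactness, and exploit the Tian--Zelditch--Catlin asymptotics for the Bergman kernel together with the convergence of Bergman approximations of singular metrics. Combining this analytic convergence with the algebraic quantization and the finite-level identity yields the desired equality $\delta(X,L) = \delta^{A}(X,L)$.
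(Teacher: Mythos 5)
Theorem \ref{zhangquantization2024} is not proved in this paper at all: it is quoted from \cite{Zhangquantization2024} (with $\delta^{A}$ introduced in \cite{Zhangcontinuitydelta2021}), so there is no internal argument to compare yours against. Measured against the strategy of the cited work, your outline does follow the same general quantization philosophy: approximate $\delta(X,L)$ by the $m$-th basis-type invariants $\delta_{m}(X,L)$ (Fujita--Odaka, Blum--Jonsson), interpret a level-$m$ invariant as a coercivity threshold of a quantized functional on the Bergman space attached to $H^{0}(X,mL)$, and then pass to the limit $m\to\infty$ using Bergman-kernel asymptotics.

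However, as a proof your proposal has a genuine gap precisely where you locate the difficulty, and the gap is not merely technical. The step ``$\delta_{m}^{A}(X,L)\to\delta^{A}(X,L)$ by $\Gamma$-convergence of the quantized entropy to the Mabuchi entropy'' does not work as stated: coercivity thresholds are not preserved under $\Gamma$-convergence unless one controls \emph{uniformly in $m$} both the coercivity slope and the additive constants, and the quantized functionals live on the finite-dimensional Bergman spaces while $\delta^{A}$ is a threshold over the full space of K\"ahler potentials (finite-energy space), so lower bounds do not transfer automatically. The actual argument establishes the two inequalities by different mechanisms: the bound $\delta^{A}\le\delta$ comes from testing the entropy against explicit quasi-psh functions with analytic singularities attached to divisorial valuations (producing $A_{X}(D)/S(L,D)$ as a coercivity obstruction), while $\delta^{A}\ge\delta$ uses quantized coercivity with explicit constants, Demailly-type Bergman approximation, Tian--Zelditch--Catlin expansions with controlled error terms, and Ohsawa--Takegoshi/openness inputs to transport coercivity from level $m$ to the limit. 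Likewise, your finite-level identification $\delta_{m}=\delta_{m}^{A}$ is not ``finite-dimensional convex geometry'' alone: it rests on identifying integrability (lct) thresholds of $m$-basis type divisors with asymptotic slopes of the quantized functional along Bergman geodesics, which requires the precise definition of the quantized entropy and a log-canonical-threshold computation. So the proposal is a reasonable roadmap consistent with \cite{Zhangquantization2024}, but the limit step, which is the real content of that paper, is missing rather than merely difficult.
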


This theorem can be used to produce sufficient conditions for an ample line bundle to admit CSC Kähler metrics or being uniform K-stable.

It is also proved by Zhang in \cite{Zhangcontinuitydelta2021} that

\begin{theorem}[\cite{Zhangcontinuitydelta2021}]
The $\delta$-invariant \( \delta(X,\cdot) \) is a continuous function on the big cone.
\end{theorem}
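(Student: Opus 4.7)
The plan is to work through the valuative characterization of $\delta$ due to Blum--Jonsson,
$$
\delta(X,L) \;=\; \inf_{E/X}\frac{A_X(E)}{S_L(E)},
\qquad
S_L(E) \;=\; \frac{1}{\Vol(L)}\int_{0}^{+\infty}\Vol(L-tE)\,dt,
$$
where the infimum runs over all prime divisors $E$ over $X$. Since $A_X(E)$ does not depend on $L$, continuity of $\delta(X,\cdot)$ at a big class $L_0$ reduces to obtaining a comparison between $S_L(E)$ and $S_{L_0}(E)$ that is \emph{uniform in $E$} for $L$ close to $L_0$.

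The main tool will be continuity and $n$-homogeneity of the volume function on the big cone, where $n=\dim X$. Fix $L_0$ in the interior of the big cone. For any $\epsilon>0$ I would produce a neighborhood $U$ of $L_0$ and scalars $\epsilon_1,\epsilon_2\in(0,\epsilon)$ such that, for every $L\in U$, both
$$
L_0 - (1-\epsilon_1)L \qquad\text{and}\qquad (1+\epsilon_2)L - L_0
$$
are pseudo-effective; geometrically, this squeezes $L_0$ between rescalings of $L$ in the pseudo-effective order. These inequalities persist after subtracting $tE$ and, by monotonicity of $\Vol$ with respect to the pseff order, pass pointwise in $t\ge 0$ to $\Vol(\,\cdot\,-tE)$. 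Integrating and performing the change of variables $t = (1-\epsilon_1)s$ on the lower side and $t=(1+\epsilon_2)s$ on the upper side (using $\Vol(cM)=c^n\Vol(M)$) yields
$$
(1-\epsilon_1)^{n+1}\,\frac{\Vol(L)}{\Vol(L_0)}\,S_L(E) \;\le\; S_{L_0}(E) \;\le\; (1+\epsilon_2)^{n+1}\,\frac{\Vol(L)}{\Vol(L_0)}\,S_L(E).
$$
The decisive point is that these bounds hold with the \emph{same constants} for every prime divisor $E$.

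Because $\Vol$ is continuous at $L_0$, the factor $\Vol(L)/\Vol(L_0)$ tends to $1$ as $L\to L_0$, so after shrinking $U$ I can make the outer multiplicative constants lie in $(1-\eta,\,1+\eta)$ for any prescribed $\eta>0$. Dividing $A_X(E)$ by $S_L(E)$ and taking the infimum over $E$ then gives
$(1-\eta)\,\delta(X,L_0)\le \delta(X,L)\le (1+\eta)\,\delta(X,L_0)$, which is the desired continuity. The main obstacle I anticipate is precisely this uniformity in $E$: the set of valuations is non-compact, so the pointwise continuity of $L\mapsto S_L(E)$ for each fixed $E$ does not by itself pass to the infimum, and the pseudo-effective squeezing step is exactly what converts continuity of the single quantity $\Vol$ into a multiplicative comparison of the integrals $\int\Vol(L-tE)\,dt$ that is independent of $E$.
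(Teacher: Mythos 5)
This statement is not proved in the paper at all: it is quoted verbatim from \cite{Zhangcontinuitydelta2021}, so there is no internal proof to compare against. Judged on its own, your argument is correct and is essentially the standard squeezing proof used in the literature for this kind of continuity statement. The chain of reasoning is sound: since $L_0$ lies in the interior of the pseudo-effective cone, writing $L_0-(1-\epsilon_1)L=\epsilon_1 L_0+(1-\epsilon_1)(L_0-L)$ and $(1+\epsilon_2)L-L_0=\epsilon_2 L_0+(1+\epsilon_2)(L-L_0)$ shows both classes are big for $L$ in a small enough neighborhood; pulling back to the model where a given divisor $E$ lives and using monotonicity of $\Vol$ under adding pseudo-effective classes, together with $n$-homogeneity and the change of variables, gives exactly the two-sided bound on $S_L(E)$ versus $S_{L_0}(E)$ with constants independent of $E$, which is the one point where a naive ``continuity for each fixed $E$'' argument would fail. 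Two small things you should make explicit to close the write-up: the quantity $S_L(E)$ is finite and positive for every prime divisor $E$ over $X$ and every big $L$ (finiteness of the pseudo-effective threshold via intersection with an ample class, positivity since smallness of $t$ keeps $f^{*}L-tE$ big), so the ratios you manipulate are well defined; and $\delta(X,L_0)<\infty$ (bounded above by $A_X(D)/S_{L_0}(D)$ for any fixed prime divisor $D$ on $X$), so the multiplicative sandwich $(1-\eta)\delta(X,L_0)\le\delta(X,L)\le(1+\eta)\delta(X,L_0)$ indeed yields continuity. Note also that the infimum formula you start from is precisely the definition of $\delta$ adopted in this paper, so no appeal to the Blum--Jonsson limit theorem is needed here.
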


When $X$ is a fibration over a smooth curve, Hattori shows the following theorem in \cite{hattori2022k}.

\begin{theorem}[\cite{hattori2022k}]
Let \( f : (X,  H) \to (C, L) \) be a polarized algebraic fiber space pair. Suppose that \( X \) is klt, \( C \) is a smooth curve, and \( \deg L = 1 \). Then
\[
\lim_{\epsilon \to 0} \delta(X,\epsilon H + f^*L) = 2 \inf_{p \in C} \mathrm{lct}(X,f^{-1}(p)).
\]
\end{theorem}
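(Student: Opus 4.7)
The plan is to squeeze $\delta(X,\epsilon H+f^{*}L)$ between matching asymptotic upper and lower bounds that both tend to $2\inf_{p\in C}\mathrm{lct}(X,f^{-1}(p))$ as $\epsilon\to 0$. The key input is the volume asymptotic in the degenerating direction. Since $L$ is a divisor on a curve, $(f^{*}L)^{2}\equiv 0$, so in $(\epsilon H+f^{*}L)^{n}$ only the linear term in $f^{*}L$ survives and, using $\deg L=1$,
\[
\mathrm{vol}(\epsilon H+f^{*}L)=n\epsilon^{n-1}(H|_{F})^{n-1}+O(\epsilon^{n}),
\]
where $F$ is a general fiber. For any $p\in C$, $F_{p}=f^{-1}(p)$, and $0\le t\le 1$, the class $f^{*}L-tF_{p}$ is numerically the pullback of a $(1-t)$-degree divisor on $C$, so
\[
\mathrm{vol}(\epsilon H+f^{*}L-tF_{p})=(1-t)\,n\epsilon^{n-1}(H|_{F})^{n-1}+O(\epsilon^{n}),
\]
while the range $t\in[1,\tau_{\epsilon}]$ between $1$ and the pseudo-effective threshold $\tau_{\epsilon}=1+O(\epsilon)$ contributes only $O(\epsilon^{n})$ to the $S$-integral. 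Dividing gives $S_{\epsilon H+f^{*}L}(F_{p})=\tfrac{1}{2}+O(\epsilon)$, uniformly in $p$.

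For the upper bound, I choose for each $p\in C$ a prime divisor $E_{p}$ over $X$ with $A_{X}(E_{p})=\mathrm{lct}(X,F_{p})\cdot v_{p}(F_{p})$, where $v_{p}=\mathrm{ord}_{E_{p}}$. Applied to the normalized valuation $v_{p}/v_{p}(F_{p})$, the same volume computation gives $S_{\epsilon H+f^{*}L}(E_{p})=\tfrac{1}{2}v_{p}(F_{p})+O(\epsilon)$, hence $\delta(X,\epsilon H+f^{*}L)\le A_{X}(E_{p})/S_{\epsilon H+f^{*}L}(E_{p})=2\,\mathrm{lct}(X,F_{p})+O(\epsilon)$; infimizing over $p$ yields the $\limsup$ bound. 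For the lower bound I split valuations $v=\mathrm{ord}_{E}$ according to whether $c_{X}(v)$ dominates $C$ (\emph{horizontal}) or lies over a single point $p$ (\emph{vertical}). In the horizontal case, restriction to a general fiber shows that the pseudo-effective threshold of $v$ against $\epsilon H+f^{*}L$ is $O(\epsilon)$, so $S_{\epsilon H+f^{*}L}(v)=O(\epsilon)$, while positivity of the $\delta$-invariant of the generic fiber keeps $A_{X}(v)$ large enough to force $A_{X}(v)/S\to\infty$. In the vertical case, normalizing $v(F_{p})=1$, the volume estimate above adapts to $S_{\epsilon H+f^{*}L}(v)\le\tfrac{1}{2}+O(\epsilon)$, while $A_{X}(v)\ge\mathrm{lct}(X,F_{p})\cdot v(F_{p})=\mathrm{lct}(X,F_{p})$ by definition of lct, yielding $A_{X}(v)/S\ge 2\,\mathrm{lct}(X,F_{p})-O(\epsilon)\ge 2\inf_{q}\mathrm{lct}(X,F_{q})-O(\epsilon)$.

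The hard part will be establishing the volume expansion uniformly across all vertical valuations, not just $v=\mathrm{ord}_{F_{p}}$; without this uniformity the infimum defining $\delta$ could absorb the error term, ruining the limit. A clean framework is a Newton--Okounkov body for $\epsilon H+f^{*}L$ adapted to a flag whose first member is a fiber $F_{p}$: in the limit $\epsilon\to 0$ the body degenerates to a product of an $(n-1)$-dimensional body coming from $H|_{F}$ with a unit segment coming from $L$ on $C$, whose center of mass $\tfrac{1}{2}$ is exactly the asymptotic value of $S$ on any vertical valuation normalized by its order along $F_{p}$. Pairing this with the lct bound on $A_{X}(v)/v(F_{p})$, in the spirit of the Abban--Zhuang inductive estimate on subvarieties, produces the stated limit.
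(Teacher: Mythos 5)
First, a point of comparison: the paper does not prove this statement at all --- it is quoted verbatim from Hattori \cite{hattori2022k} as background motivation, so there is no internal proof to measure your argument against; the closest machinery in the paper itself is the Abban--Zhuang adjunction of Section 3. On its own terms, the upper-bound half of your sketch is sound: $\mathrm{vol}(\epsilon H+f^{*}L-tF_{p})$ is computed correctly (and the pseudo-effective threshold bound $\tau_{\epsilon}=1+O(\epsilon)$ is uniform in $p$ because pseudo-effectivity is a numerical condition and all fibers are numerically equivalent), so $S(F_{p})=\tfrac12+O(\epsilon)$; combined with an lct-computing divisor $E_{p}$ and the inequality $\mathrm{ord}_{E_{p}}\geq \mathrm{ord}_{E_{p}}(F_{p})\cdot\mathrm{ord}_{F_{p}}$, which gives $S(E_{p})\geq \mathrm{ord}_{E_{p}}(F_{p})\bigl(\tfrac12-O(\epsilon)\bigr)$, this yields $\limsup_{\epsilon\to 0}\delta\leq 2\,\mathrm{lct}(X,f^{-1}(p))$ for each fixed $p$, hence the upper bound.

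The genuine gap is the lower bound, and you have named it yourself without closing it. The estimate $S_{\epsilon H+f^{*}L}(v)\leq\bigl(\tfrac12+O(\epsilon)\bigr)v(F_{p})$ is false with a constant uniform over vertical valuations: already on $X=\mathbb{P}^{1}\times C$, taking the monomial valuation $v_{k}$ with weights $k$ in the fiber direction and $1$ in the base direction at a point of $F_{p}$ (realized by a chain of $k$ infinitely near blowups), one has $v_{k}(F_{p})=1$ but $S_{\epsilon H+f^{*}L}(v_{k})\approx\tfrac12+\tfrac{k\epsilon}{2}$, so the error grows with $v$ and must be played off against $A_{X}(v_{k})=k+1$; a blanket ``$\tfrac12+O(\epsilon)$'' cannot be fed into the infimum defining $\delta$. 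The Newton--Okounkov body you propose does not repair this: a body attached to a flag whose first element is $F_{p}$ computes volumes and $S$ for the flag valuations only, so its center of mass says nothing uniform about $S(v)$ for arbitrary vertical $v$. The horizontal case has the same defect: both the threshold bound $O(\epsilon)$ and the lower bound on $A_{X}(v)$ carry $v$-dependent constants, and ``positivity of $\delta$ of the generic fiber'' is the conclusion you need, not an argument. What is missing is a mechanism that couples the $v$-dependent error to $A_{X}(v)$ uniformly --- in practice an adjunction/refinement argument along the fibers in the style of Abban--Zhuang (refine the graded linear series by the fiber through the center, so that horizontal directions contribute at scale $1/\epsilon$ and vertical contributions are controlled via $A_{X}$ together with the lct of the corresponding fiber), or Hattori's own fibration-stability techniques. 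Without such a step the claimed limit is not established.
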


These results lead us to wonder whether it is possible to find the continuous function \( \delta(X, \cdot) \) on the ample cone, and if so, what the continuous function looks like. Moreover, when \( X \) is a fibration over a smooth curve, what is the asymptotic behavior of the \( \delta \)-invariant when the polarization primarily comes from the base curve? Finally, is it possible to look for K-semistable or uniform K-stable classes using information from the $\delta-$invariant?

With these questions in mind, we start to consider $\delta-$invariant for ample line bundles on a projective bundle over a smooth curve. 
The main results of this article are the following.

\begin{theorem}\label{maintheorem1}
    Assume that $E$ is strictly semistable of $\mathrm{rank}$ $n$ and  slope $\mu$. For an ample line bundle on $\PE$ numerically equivalent to $aH_{E}+bF$, where $H_{E} = \mathcal{O}_{\PE}(1)$ and $F$ is a fiber, we have

    \[ \delta(X,L) = \min\left(\frac{2}{a\mu+b},\frac{n}{a}\right).\]

    If $\mu = 0$, the formula becomes
    \[\delta(X,L) = \min\left(\frac{2}{b},\frac{n}{a}\right).\]
\end{theorem}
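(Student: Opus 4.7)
The plan is to establish matching upper and lower bounds for $\delta(X,L) = \inf_v A_X(v)/S_L(v)$.

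\textbf{Volume preliminaries.} Because $E$ is semistable of slope $\mu$, a standard fact is that $aH_E + cF$ is nef (and pseudoeffective) exactly when $a \geq 0$ and $a\mu + c \geq 0$, with volume $na^{n-1}(a\mu+c)$ throughout this range; in particular $L^n = na^{n-1}(a\mu+b)$. I will use this as the main computational input.

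\textbf{Upper bound from the fiber.} I test on the divisorial valuation $\mathrm{ord}_F$, for which $A_X(F)=1$. The line bundle $L-tF = aH_E + (b-t)F$ remains pseudoeffective on $[0, a\mu+b]$, so
\[
S_L(F) = \frac{1}{L^n}\int_0^{a\mu+b} na^{n-1}(a\mu+b-t)\,dt = \frac{a\mu+b}{2},
\]
giving $\delta(X,L) \leq 2/(a\mu+b)$.

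\textbf{Upper bound from a horizontal divisor.} Strict semistability furnishes a proper subbundle $E_1 \subset E$ with $\mu(E_1) = \mu$. When $E_1$ can be chosen of rank $1$, the divisor $D = \PEE \subset \PE$ is smooth of class $H_E - \mu F$, $A_X(D) = 1$, and $L - tD = (a-t)H_E + (b+t\mu)F$ has volume $n(a-t)^{n-1}(a\mu+b)$ for $0 \leq t \leq a$, yielding $S_L(D) = a/n$ and $A_X(D)/S_L(D) = n/a$. In general, I would take $E_1$ of minimal rank $r$ among subs of slope $\mu$ and use the divisorial valuation attached to the exceptional divisor of $\blup$; its log discrepancy is $r$, and a direct computation on the blowup (which fits in a tower of projective bundles) should give $S_L = ar/n$, again $A/S = n/a$.

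\textbf{Lower bound.} For the matching lower bound I propose an Abban--Zhuang style case-split on the image of $\mathrm{center}(v)$ in $C$. If $v$ is \emph{vertical}, its center lies in a fiber $F_p \cong \mathbb{P}^{n-1}$, and inversion of adjunction along $F_p$, combined with the classical $\delta(\mathbb{P}^{n-1},\mathcal{O}(a)) = n/a$ and the fiber bound $A_X(F_p)/S_L(F_p) = 2/(a\mu+b)$, yields $A_X(v)/S_L(v) \geq \min(2/(a\mu+b), n/a)$. If $v$ is \emph{horizontal}, I restrict $v$ to a general fiber and use semistability of $E$ to guarantee that the restricted graded linear system behaves asymptotically like a twist of $|\mathcal{O}_{\mathbb{P}^{n-1}}(a)|$, again giving $A_X(v)/S_L(v) \geq n/a$.

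\textbf{Main obstacle.} The horizontal case of the lower bound is the principal difficulty: one must prevent some clever valuation from beating $n/a$, and semistability of $E$ is indispensable here. A workable approach is to reduce to $\mathbb{G}_m$-equivariant valuations with respect to a splitting (or a torus action coming from a Jordan--H\"older filtration) and classify them explicitly. The general-rank variant of the horizontal upper bound is a secondary obstacle, requiring a careful description of $\blup$ as a tower of projective bundles to run the volume computation cleanly.
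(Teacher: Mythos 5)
Your two upper bounds are correct and coincide with the ones the paper uses: the fiber gives $S(L,F)=\tfrac{a\mu+b}{2}$, and the slope-$\mu$ subbundle $E_{1}$ of rank $r$ gives a divisorial valuation with $A_{X}=r$ and $S=\tfrac{ar}{n}$ (the divisor $\mathbb{P}(E/E_{1})$ itself when $r=1$, the exceptional divisor of $\mathrm{Bl}_{\mathbb{P}(E/E_{1})}\mathbb{P}(E)$ in general). Your rank-one computation is complete and correct, and you are right that the general-$r$ case is genuinely needed (a strictly semistable bundle need not contain a slope-$\mu$ line subbundle); the paper carries out that computation via Nakayama's description of the blow-up as a projective bundle over $\mathbb{P}(E_{1})$ together with a Segre-class/intersection calculation, which is essentially the route you sketch, so this half is fine modulo writing out that calculation.

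The genuine gap is the lower bound, which you yourself flag as unresolved, and your sketch of it is partly misdirected. First, the vertical/horizontal case split is unnecessary and the vertical case is not as immediate as you claim: in the Abban--Zhuang framework one refines $V_{\cdot}$ by the fiber $F$ through an arbitrary point $P$, and the quantity that must be controlled for \emph{every} valuation with center through $P$ (horizontal or vertical alike) is $S$ of the refined bigraded series $W^{F}_{m,j}=\mathrm{Im}\bigl(H^{0}(X,mL-jF)\rightarrow H^{0}(\mathbb{P}^{n-1},\mathcal{O}(am))\bigr)$; one cannot simply quote $\delta(\mathbb{P}^{n-1},\mathcal{O}(a))=n/a$ without relating $S(W^{F}_{\cdot,\cdot};D)$ to $S(\mathcal{O}(a);D)$. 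The missing (and in fact easy) idea is the counting bound: by the shape of the pseudo-effective cone, $W^{F}_{m,j}=0$ unless $0\leq j\leq (a\mu_{\max}+b)m$, so
\[
\Vol(\mathcal{F}^{t}_{D}W^{F}_{\cdot,\cdot})\leq n(a\mu_{\max}+b)\Vol(\mathcal{O}_{\mathbb{P}^{n-1}}(a)-tD),
\]
which yields $\delta_{P}(F,W^{F}_{\cdot,\cdot})\geq \frac{n(a\mu+b)}{a(a\mu_{\max}+b)}$; for semistable $E$ one has $\mu_{\max}=\mu$ and the constants cancel to give exactly $\tfrac{n}{a}$, so $\delta_{P}(X,L)\geq\min\bigl(\tfrac{2}{a\mu+b},\tfrac{n}{a}\bigr)$ for every $P$, with no separate treatment of horizontal valuations. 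Second, your proposed fallback for the horizontal case --- reduction to $\mathbb{G}_m$-equivariant valuations via a splitting or the Jordan--H\"older filtration --- does not obviously make sense here: a strictly semistable $E$ need not split, and the Jordan--H\"older filtration produces a degeneration of $E$ to its graded object, not a torus action on $\mathbb{P}(E)$ itself, so there is no equivariance on $X$ to reduce to. As written, the crux of the theorem (that no valuation beats $\min(2/(a\mu+b),n/a)$) is therefore not established in your proposal.
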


\begin{remark}
    Notice that this last formula is compatible with the formula in \cite{zhuang2020product} for a product of varieties.
\end{remark}

We also have the following theorem which concerns a more complicated situation.

\begin{theorem}\label{maintheorem2}
    Let $C$ be a smooth curve and $E$ a vector bundle of rank $n$ and slope $\mu$ over $C$. We assume also that the Harder-Narasimhan filtration of $E$ has only one step,
    \[0=E_{0}\subset E_{1}\subset E_{2}=E\]
    and $E_{1}$ is of rank $r$. Let $\mu_{\max}$ be the slope of $E_{1}$, $\mu_{\min}$ be the slope of $E/E_{1}$. Let $H_{E} = \mathcal{O}_{\PE}(1)$ and $\xi$ be the corresponding numerical class. Let $F$ be a fiber and $f$ be the corresponding numerical class. Let $L$ be an ample line bundle on $X=\PE$ of which the numerical class is $a\xi+bf$. Then the expected vanishing order of $L$ with respect to $F$ is,

\[S(L,F) = \frac{\frac{n(a\mu_{min}+b)(a(2\mu-\mu_{\min})+b)}{2} + a^{2}(\mu_{\max} - \mu_{\min})^{2} \frac{r(r+1)}{2(n+1)}}{n(a\mu+b)}.\]
Setting
\begin{align*}
    &s_{1} = \frac{n(n+1)(a\mu+b)}{a((r+1)(a\mu_{\max}+b)+(n-r)(a\mu_{\min}+b))}\\
    &s_{2} = \frac{n(a\mu+b)}{a(a\mu_{\max}+b)}
\end{align*}
We have,
\[\min\left(\frac{1}{S(L,F)},s_{1}\right) \geq \delta(X,L) \geq\min\left(\frac{1}{S(L,F)},s_{2}\right).\]
If we fix $a$ and $b>>0$ (actually the effective lower bound for $b$ can be provided), 
\[\min\left(\frac{1}{S(L,F)},s_{2}\right)
=\frac{1}{S(L,F)},\]
and in this case,
\[\delta(X,L) = \frac{1}{S(L,F)}.\]
\end{theorem}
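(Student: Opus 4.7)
The proof splits into three tasks: (A) computing $S(L,F)$; (B) bounding $\delta(X,L)$ from above and below; (C) the asymptotic regime $b\gg 0$.

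For (A), write $S(L,F)=\Vol(L)^{-1}\int_{0}^{\tau}\Vol(L-tF)\,dt$ with $\Vol(L)=na^{n-1}(a\mu+b)$, and split the integral at the nef threshold $t=b+a\mumin$, past which $L-tF=a\xi+(b-t)f$ remains big only up to the pseudo-effective threshold $t=b+a\mumax$. In the nef range, $(L-tF)^{n}=na^{n-1}(a\mu+b-t)$, and its integration reproduces the first summand $\tfrac{n(a\mumin+b)(a(2\mu-\mumin)+b)}{2}$ of the stated numerator (after cancelling the common $a^{n-1}$ with $\Vol(L)$). For the non-nef but big range, pass to the blow-up $\rho:\blup\to\PE$ and write $\rho^{*}(L-tF)=P(t)+\lambda(t)\,G$, where $G$ is the exceptional divisor and $\lambda(t)$ is fixed by requiring $P(t)$ to be nef; expanding $P(t)^{n}$ via the Chern classes of the normal bundle $\N$ and integrating over $[b+a\mumin,\,b+a\mumax]$ produces the correction $a^{n+1}(\mumax-\mumin)^{2}\tfrac{r(r+1)}{2(n+1)}$, with the factor $\tfrac{r(r+1)}{n+1}$ originating from the $\mathbb{P}^{r-1}$-fibres of $G\to\PEE$.

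For (B), the upper bound uses two test divisors. The fiber $F$ has $A_{X}(F)=1$, giving $\delta(X,L)\le 1/S(L,F)$. The natural candidate for the $s_{1}$ bound is the exceptional divisor $G$ of $\rho$: since $\PEE$ has codimension $r$ in $\PE$, $A_{X}(G)=r$, and a parallel volume integral on $\blup$ is designed to yield $A_{X}(G)/S(L,G)=s_{1}$. The lower bound comes from the Abban--Zhuang inequality applied with $F$ as plt centre,
\[\delta(X,L)\ge\min\Bigl(\tfrac{1}{S(L,F)},\ \delta\bigl(F,W^{F}_{\bullet}\bigr)\Bigr),\]
where $W^{F}_{\bullet}$ is the refined multigraded series on $F\cong\mathbb{P}^{n-1}$ obtained from the images of $H^{0}(X,mL-\lfloor mt\rfloor F)$. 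Since $\delta(\mathbb{P}^{n-1},\mathcal{O}(a))=n/a$ is attained by a hyperplane $H\subset F$, it suffices to estimate $S(W^{F}_{\bullet},H)$; one shows $S(W^{F}_{\bullet},H)\le 1/s_{2}=a(a\mumax+b)/(n(a\mu+b))$ by analysing the defining double integral, using that the restriction map $H^{0}(X,mL-kF)\to H^{0}(F,\mathcal{O}(ma))$ is surjective for $k\le m(b+a\mumin)$ and has rank controlled by $E_{1}$ for larger $k$.

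For (C), the explicit formula from (A) makes $S(L,F)$ grow linearly in $b$, so $1/S(L,F)\to 0$, while $s_{1},s_{2}\to n/a>0$ as $b\to\infty$ with $a$ fixed; the effective threshold is the solution of the rational inequality $1/S(L,F)\le s_{2}$, past which both sides of the sandwich collapse to $1/S(L,F)$, yielding $\delta(X,L)=1/S(L,F)$. The principal obstacle is the non-nef volume computation in (A): one must solve for $\lambda(t)$ explicitly and expand $P(t)^{n}$ using the Chern and Segre classes of $\N$; the same blow-up geometry then feeds directly into the $s_{1}$ upper bound, after which the Abban--Zhuang lower bound is comparatively routine once $S(L,F)$ is in hand.
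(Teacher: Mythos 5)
Your part (A), your $s_{1}$ upper bound, and your part (C) follow essentially the same route as the paper: split $\int_{0}^{\tau}\Vol(L-tF)\,dt$ at the nef threshold $b+a\mumin$, use Nakayama's Zariski decomposition on $\blup$ together with the Segre classes of $\N\cong p_{2}^{*}(E_{1}^{*})\otimes\mathcal{O}_{\PEE}(1)$ on the big-but-non-nef range (the paper then needs a combinatorial identity to land on the factor $\tfrac{r(r+1)}{2(n+1)}$), and test $\delta$ against $F$ and against the exceptional divisor, whose log discrepancy is $r$ and whose $S$-value gives exactly $s_{1}$; the asymptotic statement for $b\gg 0$ is immediate from the closed formula for $S(L,F)$. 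As a sketch these parts are sound and match the paper's argument.

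The genuine gap is in your lower bound. You assert that, because $\delta(\mathbb{P}^{n-1},\mathcal{O}(a))=n/a$ is attained by a hyperplane, ``it suffices to estimate $S(W^{F}_{\cdot,\cdot},H)$'' for a single hyperplane $H\subset F$. It does not: $\delta_{P}(F,W^{F}_{\cdot,\cdot})$ is an infimum of $A_{F}(D)/S(W^{F}_{\cdot,\cdot};D)$ over \emph{all} prime divisors $D$ over $F$, so an estimate for the one divisor $H$ bounds a single term of that infimum and yields no lower bound on $\delta_{P}(F,W^{F}_{\cdot,\cdot})$. The fact that hyperplanes compute $\delta$ for the complete linear series of $\mathcal{O}(a)$ does not transfer to the refined multigraded series: the pieces $W^{F}_{m,j}$ are in general proper subspaces of $H^{0}(\mathbb{P}^{n-1},\mathcal{O}(am))$ for large $j$, and a priori the minimizer could instead be a divisor adapted to the linear subspace $F\cap\mathbb{P}(E/E_{1})$, along which the $j$-filtration concentrates vanishing. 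What is needed — and what the paper's Proposition 3.3 supplies — is a comparison uniform in $D$: since $W^{F}_{m,j}\subseteq H^{0}(\mathcal{O}(am))$ and $W^{F}_{m,j}=0$ once $j>(a\mumax+b)m$ (pseudo-effectivity of $mL-jF$), one gets $\Vol(\mathcal{F}^{t}_{D}W^{F}_{\cdot,\cdot})\le n(a\mumax+b)\Vol(\mathcal{O}_{\mathbb{P}^{n-1}}(a)-tD)$ and hence $S(W^{F}_{\cdot,\cdot};D)\le \frac{n(a\mumax+b)\Vol(\mathcal{O}(a))}{\Vol(L)}\,S(\mathcal{O}(a),D)$ for \emph{every} $D$ over $F$; only after this uniform step may one invoke $\delta(\mathbb{P}^{n-1},\mathcal{O}(a))=n/a$ to conclude $\delta_{P}(F,W^{F}_{\cdot,\cdot})\ge s_{2}$ and feed it into Abban--Zhuang. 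Your surjectivity input (restriction surjective for $j$ below the nef threshold, via vanishing) is not what this direction needs; in the paper it is used only for the reverse inequality on $\delta_{P}(F,W^{F}_{\cdot,\cdot})$, which the theorem does not require.
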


With these pieces of information, we can find some K-semistable classes on $\PE$, where $E$ is a rank 2 semistable vector bundle of degree $0$ over an smooth elliptic curve (see Example \ref{exksemi}).

\subsection*{Organization of the paper}
\begin{itemize}
    \item In section 2, we provide the basic setup. We recall the definitions of the Harder-Narasimhan filtration for vector bundles over a curve, nef and pseudo-effective cones of projective bundles over a curve, and the $\delta-$invariant of big line bundles.
    \item In section 3, we use Abban-Zhuang method to get a lower bound for $\delta-$invariant of ample line bundles over a projective bundle.
    \item In section 4, we use the Zariski decomposition developed by Nakayama (\cite{nakayama2004zariski}), along with intersection theory and combinatorics, to provide upper bounds for the $\delta-$invariant when there is only one non-trivial subbundle in the Harder-Narasimhan filtration of the vector bundle. Consequently, we get Theorem \ref{maintheorem2}.
    \item In section 5, we give the formula of $\delta-$invariant of ample line bundles on a projective bundle in Theorem \ref{maintheorem1}, when the corresponding vector bundle is strictly semistable. This formula has applications in the theory of K-stability. We also provide a classical deformation argument to demonstrate that every ample line bundle on a projective bundle over a curve is K-semistable if and only if the associated vector bundle is Mumford semistable. 
\end{itemize}

\subsection*{Acknowledgments}

The authors would like to express their gratitude to their supervisors Vestislav Apostolov, Nathan Grieve, Julien Keller, and Steven Lu for valuable discussions. We would also like to thank Ruadhai Dervan for helpful communications and Xia Xiao for discussions.

\section{Basic setup}

Let $C$ be a smooth curve of genus $g$. Let $E$ be a vector bundle of rank $n$ and slope $\mu$ over $C$. We denote the projective bundle of hyperplanes of $E$ as $\mathbb{P}(E)$, and the projective bundle of lines of $E$ as $\mathcal{P}(E)$. So we have $\mathbb{P}(E) = \mathcal{P}(E^{*})$. We have the Harder-Narasimhan filtration \cite{harder1975cohomology} (see also \cite{nakayama2004zariski}
)

\[0 = E_{0}\subset E_{1}\subset \cdots \subset E_{l} = E.\]

We recall that the Harder-Narasimhan filtration is the unique filtration of sub-bundles of $E$ with the following properties

\begin{itemize}
    \item $\forall i; 1\leq i \leq l$, the bundles $E_{i}/E_{i-1}$ are semistable.\\
    \item $\forall i; 1\leq i \leq l-1$, we have  $\mu(E_{i}/E_{i-1})>\mu(E_{i+1}/E_{i}).$
\end{itemize}

We denote $\mu_{\max} = \mu(E_{1})$, $\mu_{\min} = \mu(E_{l}/E_{l-1})$ and $\mu = \mu(E)$, where $\mu(E)$ represents the slope of $E$. 

Let $X = \PE$, $H_{E} = \mathcal{O}_{\PE}(1)$ and $F$ be a fiber. Let $\xi$ and $f$ be the corresponding numerical classes of $H_{E}$ and $F$. Then it is known that $N^{1}(X) = \mathbb{R}\xi\oplus \mathbb{R}f$. The following result is well-known.

\begin{theorem}[\cite{effeccone},\cite{Miyaokaminimalvariety1987},\cite{nakayama2004zariski}]
The nef cone of $X=\PE$ is generated by $f$ and $\xi-\mu_{\min}f$, the pseudo-effective cone of $X$ is generated by $f$ ad $\xi-\mu_{\max}f$. 
    
\end{theorem}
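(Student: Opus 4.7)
The plan is to establish the nef and pseudo-effective cone descriptions independently, reducing in each case to the semistable situation via the Harder-Narasimhan filtration. Writing a numerical class as $a\xi+bf$, the half-plane condition $a\geq 0$ is immediate in both cases by restriction to a fiber $F\cong\mathbb{P}^{n-1}$: on $F$ the class $\xi$ restricts to the hyperplane class and $f$ to zero, so any nef or pseudo-effective class must satisfy $a\geq 0$. It therefore remains to pin down the sharp lower bound on $b$ in each case.

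For the nef cone, I would prove that $\xi-\mu_{\min}f$ is nef by testing against an arbitrary irreducible curve $D\subset X$. If $D$ lies in a fiber, then $(\xi-\mu_{\min}f)\cdot D=\xi\cdot D\geq 0$ trivially. Otherwise $D$ dominates $C$ with degree $d=f\cdot D$; normalization gives a finite cover $\pi:\tilde D\to C$ of degree $d$, and the lift of $D$ to $\mathbb{P}(\pi^{*}E)$ is a section, corresponding to a quotient line bundle $\pi^{*}E\twoheadrightarrow M$ with $\deg M=\xi\cdot D$. In characteristic zero the Harder-Narasimhan slopes are invariant under finite covers of smooth curves, so $\mu_{\min}(\pi^{*}E)=\mu_{\min}(E)$, which forces $\deg M\geq d\mu_{\min}$ and hence $(\xi-\mu_{\min}f)\cdot D\geq 0$. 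Optimality is witnessed inside the subvariety $\PEE\subset X$ coming from the semistable quotient $E\twoheadrightarrow E/E_{l-1}$ of slope $\mu_{\min}$: after a sufficiently ramified base change the pullback of $E/E_{l-1}$ admits quotient line bundles of slope arbitrarily close to $\mu_{\min}$, producing curves on which $\xi\cdot D/f\cdot D$ approaches $\mu_{\min}$, so no class with $b+a\mu_{\min}<0$ can be nef.

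For the pseudo-effective cone, I would show $\xi-\mu_{\max}f$ is pseff by exhibiting $\xi-(\mu_{\max}-\varepsilon)f$ as big for every $\varepsilon>0$ and letting $\varepsilon\to 0$. Fixing a line bundle $L$ on $C$ of degree one, the space of sections of $m(\xi-(\mu_{\max}-\varepsilon)f)$ identifies with $H^{0}(C,\mathrm{Sym}^{m}E\otimes L^{-m(\mu_{\max}-\varepsilon)})$. The inclusion $\mathrm{Sym}^{m}E_{1}\hookrightarrow\mathrm{Sym}^{m}E$ induced by the maximal destabilizing subbundle $E_{1}$ of slope $\mu_{\max}$ reduces the problem to producing sections of $\mathrm{Sym}^{m}E_{1}\otimes L^{-m(\mu_{\max}-\varepsilon)}$, which is semistable of strictly positive slope $m\varepsilon$. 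For $m\gg 0$ Riemann-Roch combined with the vanishing $h^{1}=0$ for a semistable bundle of large positive slope forces $h^{0}>0$, making the class effective.

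The main obstacle is optimality on the pseff side: ruling out $a\xi+bf$ with $a\geq 0$ and $b<-a\mu_{\max}$ from being pseudo-effective. When $E$ is semistable ($\mu_{\max}=\mu_{\min}$) this follows from the nef optimality already established. In general one needs the dual viewpoint that pseff is bounded by pairings with movable curve classes: the movable class of slope $\mu_{\max}$ arises from limits of sections approaching the destabilizing locus, realized concretely via the resolution $\blup\to\mathbb{P}(E_{1})$ of the rational projection $\mathbb{P}(E)\dashrightarrow\mathbb{P}(E_{1})$ induced by the inclusion $E_{1}\hookrightarrow E$. Pairing such a class against $a\xi+bf$ yields an expression of the form $(a\mu_{\max}+b)(f\cdot D)$, strictly negative past the boundary, which rules out pseudo-effectivity; the precise construction (together with the closely related Zariski-decomposition viewpoint) is developed in the cited references \cite{effeccone,Miyaokaminimalvariety1987,nakayama2004zariski}.
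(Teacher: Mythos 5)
The paper itself does not prove this statement; it simply quotes it from the cited references, so your proposal can only be judged on whether it stands alone, and both of its ``sharpness'' halves have genuine gaps. On the nef side, the positivity argument is fine (multisections, normalization, quotient line bundles of $\pi^{*}E$, preservation of semistability under finite pullback in characteristic zero; note the HN slopes are not invariant but scale by $\deg\pi$, which is what your displayed inequality actually uses). The gap is the optimality step: you assert that after a sufficiently ramified base change the pullback of $E/E_{l-1}$ admits quotient line bundles of normalized degree arbitrarily close to $\mumin$. This is not an elementary quotable fact: by cone duality it is essentially equivalent to the sharpness you are trying to establish (it amounts to producing irreducible multisections $D$ with $\xi\cdot D/f\cdot D\to\mumin$), and the general bounds on minimal quotient-line-bundle degrees (Nagata, Mukai--Sakai type) have a defect that grows like the genus of the cover, so they do not yield it. The standard and much shorter argument avoids it entirely: for $t>\mumin$ restrict $\xi-tf$ to $\mathbb{P}(E/E_{l-1})$, where the top self-intersection equals $q(\mumin-t)<0$ with $q=\rk(E/E_{l-1})$; since a nef class restricts to a nef class and nef classes have nonnegative top self-intersection, $\xi-tf$ is not nef.

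The pseudo-effective half has the same structure. Your proof that $\xi-\mumax f$ is pseudo-effective (sections of $\mathrm{Sym}^{m}E_{1}$ twisted down by slightly less than $\mumax$, via semistability, $h^{1}=0$ for large slope, and Riemann--Roch) is correct. But the optimality direction is not actually proved: the movable curve class ``of slope $\mumax$'' whose existence and intersection numbers you invoke is precisely the content at stake, no such class is constructed, the claimed pairing formula is not verified, and you explicitly defer ``the precise construction'' to the references --- so this half is a citation, not an argument. A self-contained fix in the same spirit as your positivity step: for $s>\mumax$, any line bundle with numerical class $m(\xi-sf)$ is of the form $mH_{E}+\pi^{*}N_{m}$ with $\deg N_{m}=-ms$, and its sections are $H^{0}(C,\mathrm{Sym}^{m}E\otimes\mathcal{O}(N_{m}))$; since in characteristic zero $\mu_{\max}(\mathrm{Sym}^{m}E)=m\mumax$, this bundle has $\mu_{\max}=m(\mumax-s)<0$ and hence no sections, so $\mathrm{Vol}(\xi-sf)=0$ and $\xi-sf$ is not big. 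Because the sum of a pseudo-effective and an ample class is big, no class $\xi-tf$ with $t>\mumax$ can be pseudo-effective. (Minor point: for the half-plane condition $a\geq 0$ in the pseudo-effective case, restrict to a general fiber, or pair with the class of a line in a fiber, which is movable.)
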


Now let $L$ be an ample line bundle on $X$, then the numerical class of $L$, denoted by $[L]$, is $[L] = a\xi + bf$. Here $a>0$ and $\frac{-b}{a}<\mu_{\min}$.

In this paper, we want to get information about the $\delta-$invariant of $L$.
 
\begin{defn}[\cite{fujita2018kstability}]
     Let $L$ be a big line bundle on a klt projective variety $X$, then the $\delta$-invariant of $L$ is

     \[\delta(X,L) := \inf_{D/X}\frac{A_{X}(D)}{S(L,D)},\]
     where $D$ is a prime divisor over $X$, meaning that there is a proper birational morphism $f:Y\rightarrow X$ between normal varieties and $D$ is a prime divisor on $Y$. The function $A_{X}$ is the log discrepancy function. The expected vanishing order of $L$ with respect to $D$ is defined as
     \[S(L,D) := \frac{1}{\mathrm{Vol(L)}}\int_{0}^{\tau}\mathrm{Vol}(f^{*}(L)-tD)dt,\]
     where $\tau$ is the pseudo-effective threshold of $f^{*}(L)$ with respect to $D$.

     If $Z$ is a closed subvariety of $X$, then we define

     \[\delta_{Z}(X,L):= \inf_{\substack{D/X,\\ Z\subset C_{X}(D)}}\frac{A_{X}(D)}{S(L,D)},\]
     where $C_{X}(D)$ is simply the image of $D$ on $X$.
\end{defn}

\begin{remark}
The expected vanishing order of $L$ with respect to $D$ has been widely used in arithmetic geometry. For example, in \cite{Nathanexpectation2023}, Grieve explains it from a more arithmetic perspective.
\end{remark}

\section{A lower bound by Abban-Zhuang}

We use the method of Abban-Zhuang in this part. We first recall some of the definitions and results of \cite{abbanzhuang}.

\begin{defn}[\cite{abbanzhuang}]
   Let $X$ be a klt projective variety of dimension $n$. Let $Y$ be a Cartier prime divisor of $X$ which is itself a klt projective variety. Let $L$ be a big line bundle on $X$ and $L_{Y} = L|_{Y}$, $M = -Y|_{Y}$. Denote $V_{m} := H^{0}(X,mL)$ and $V_{\cdot}^{X} := \bigoplus_{m\geq 0}V_{m}$. The refinement of $V_{\cdot}^{X}$ by $Y$ is defined as,
   \begin{align*}
   &W_{\cdot,\cdot}^{Y} := \bigoplus_{m,j\geq 0}W_{m,j}^{Y},\\
   &W_{m,j}^{Y} := \mathrm{Im}(H^{0}(X,mL-jY)\rightarrow H^{0}(Y,mL_{Y}+jM)).
   \end{align*}
   For a prime divisor $D$ over Y, we define a filtration of $W_{\cdot,\cdot}^{Y}$,
   \[\mathcal{F}^{t}_{D}W_{\cdot,\cdot}^{Y} := \bigoplus_{m,j\geq 0}\mathcal{F}_{D}^{mt}W_{m,j}^{Y},\]
   where $\mathcal{F}_{D}^{mt}W_{m,j}^{Y} = \{s\in W_{m,j}^{Y}|\mathrm{ord}_{D}(s)\geq mt\}.$ So for each $t\geq 0$, we have a $\mathbb{Z}^{2}_{\geq 0}-$graded linear series.

   The volume function is defined as follows
   \[\mathrm{Vol}(\mathcal{F}^{t}_{D}W_{\cdot,\cdot}^{Y}) := \lim_{m\rightarrow \infty}\frac{\sum_{j\geq 0}\mathrm{dim}(\mathcal{F}^{mt}_{D}W_{m,j}^{Y})}{m^{n}/n!}.\]

   Especially, $\mathrm{Vol}(W_{\cdot,\cdot}^{Y}) = \mathrm{Vol}(\mathcal{F}^{0}_{D}W_{\cdot,\cdot}^{Y})$ which is actually just $\mathrm{Vol}(L)$ by simple consideration. 

   We define then the expected vanishing order function
   \[S(W_{\cdot,\cdot}^{Y};D) := \frac{1}{\mathrm{Vol}(W_{\cdot,\cdot}^{Y})}\int_{0}^{\infty}\mathrm{Vol}(\mathcal{F}^{t}_{D}W_{\cdot,\cdot}^{Y})dt\]

   Furthermore, we define for closed $Z\subset Y$

   \[\delta_{Z}(Y,W_{\cdot,\cdot}^{Y}) := \inf_{\substack{D/Y,\\ Z\subset C_{Y}(D)}}\frac{A_{Y}(D)}{S(W^{Y}_{\cdot,\cdot};D)}.\]
\end{defn}

One result of \cite{abbanzhuang} (see also \cite[Section 1.7]{calabithree}) is the following.

\begin{theorem} [\cite{abbanzhuang}]\label{abbanzhuang}
    Let $X$ and $Y$ as above and $L$ be an ample line bundle on $X$. Let $Z$ be an irreducible closed subvariety of $Y$. Then we have

    \[\delta_{Z}(X,L)\geq \min\left(\frac{1}{S_{L}(Y)},\delta_{Z}(Y,W_{\cdot,\cdot}^{Y})\right) \text{.} \]
\end{theorem}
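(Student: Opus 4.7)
My plan is to take an arbitrary prime divisor $D$ over $X$ whose center contains $Z$ and establish
$$\frac{A_{X}(D)}{S(L,D)}\geq \min\left(\frac{1}{S_{L}(Y)},\delta_{Z}(Y,W^{Y}_{\cdot,\cdot})\right).$$
Taking the infimum over all such $D$ then yields the claim. Writing $c=\mathrm{ord}_{D}(Y)\geq 0$, the analysis is driven by comparing the valuation $\mathrm{ord}_{D}$ with its ``restriction'' $D'$ to $Y$, a valuation on the function field of $Y$ obtained from a common birational model of $X$ and $Y$ on which $D$ is realized as a divisor meeting the strict transform of $Y$.

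\textbf{Step 1 (filtration by $Y$).} I would establish the key volume estimate
$$S(L,D)\leq c\cdot S_{L}(Y) + S(W^{Y}_{\cdot,\cdot};D')$$
by filtering $H^{0}(X,mL)$ via $F^{j}:=H^{0}(X,mL-jY)$. Any section $s\in F^{j}\setminus F^{j+1}$ can be written $s=y^{j}\cdot s'$, where $y$ is a local equation of $Y$ and $s'$ is a section of $mL-jY$ not vanishing identically along $Y$; hence $\mathrm{ord}_{D}(s)=jc+\mathrm{ord}_{D}(s')$, and the second term is controlled by $\mathrm{ord}_{D'}$ applied to the image of $s'$ in $W^{Y}_{m,j}$. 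Choosing bases of $H^{0}(X,mL)$ compatible both with the $j$-filtration and with the $D$-filtration, then summing $\mathrm{ord}_{D}$ over such a basis, dividing by $m\cdot h^{0}(X,mL)$ and letting $m\to\infty$, will yield the two asserted contributions to $S(L,D)$.

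\textbf{Step 2 (log discrepancy and mediant).} By adjunction $K_{Y}=(K_{X}+Y)|_{Y}$ one has $A_{X}(D)\geq A_{Y}(D')+c$. Combined with Step 1,
$$\frac{A_{X}(D)}{S(L,D)}\geq \frac{A_{Y}(D')+c}{S(W^{Y}_{\cdot,\cdot};D')+c\cdot S_{L}(Y)}\geq \min\left(\frac{A_{Y}(D')}{S(W^{Y}_{\cdot,\cdot};D')},\frac{1}{S_{L}(Y)}\right),$$
using the elementary mediant inequality $(a_{1}+a_{2})/(b_{1}+b_{2})\geq\min(a_{1}/b_{1},a_{2}/b_{2})$ for positive reals (the case $c=0$ is handled by dropping the $Y$-contribution). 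Since $Z\subset C_{X}(D)\cap Y\subset C_{Y}(D')$, the first term is bounded below by $\delta_{Z}(Y,W^{Y}_{\cdot,\cdot})$, closing the argument.

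\textbf{Main obstacle.} The hardest part will be making Step 1 precise: defining $D'$ unambiguously when the center of $D$ meets $Y$ transversally rather than lying inside it, and showing that the filtered basis argument genuinely yields the asymptotic decomposition of $S(L,D)$ with the correct limit. This requires a careful Okounkov-body / concave-transform argument on a suitable log resolution, and is the true technical input inherited from \cite{abbanzhuang}; once it is granted, Step 2 is formal.
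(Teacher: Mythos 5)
First, note that the paper itself gives no proof of this statement: Theorem \ref{abbanzhuang} is quoted from \cite{abbanzhuang} (see also Section 1.7 of \cite{calabithree}), so your proposal has to be measured against the argument of that source. Measured that way, your outline is not the Abban--Zhuang argument, and its central step has a genuine gap, which is precisely the point you flag as the ``main obstacle.'' Step 1 presupposes that an arbitrary prime divisor $D$ over $X$ whose center merely contains $Z\subset Y$ admits a ``restriction'' $D'$, i.e.\ a divisorial valuation on $Y$ with $\mathrm{ord}_{D'}(s|_Y)\geq \mathrm{ord}_{D}(s)$, produced from a model on which $D$ meets the strict transform of $Y$. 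Such a model need not exist: already for $X$ a surface and $Y$ a curve through a point $p$, blow up $p$ and then a point $q$ of the first exceptional curve $E_{1}$ not lying on the strict transform of $Y$; the divisor $D=E_{2}$ has center $p\in Y$ and $\mathrm{ord}_{D}(Y)=1$, but $D$ is disjoint from the strict transform of $Y$ on this model and hence on every higher model, so no restricted valuation $D'$ in your sense exists and the inequality $S(L,D)\leq c\cdot S_{L}(Y)+S(W^{Y}_{\cdot,\cdot};D')$ cannot even be formulated for it. The discrepancy estimate $A_{X}(D)\geq A_{Y}(D')+c$ of Step 2 has the same defect: it is not a consequence of adjunction for a loosely specified $D'$, and any repair must account for the part of $A_{X}(D)$ coming from directions transverse to $Y$, which your bookkeeping does not do.

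The cited proof avoids restricting the competitor valuation altogether. One expresses the $m$-th approximation $\delta_{Z,m}$ through $m$-basis type divisors; for a fixed $D$ over $X$ with $Z\subseteq C_{X}(D)$ one chooses a basis of $H^{0}(X,mL)$ compatible simultaneously with the filtration induced by $\mathrm{ord}_{D}$ and the one induced by $\mathrm{ord}_{Y}$ (two filtrations of a finite-dimensional vector space always admit a common compatible basis). The associated basis type divisor decomposes as $B=S_{m}(Y)\,Y+\Gamma$ with $Y\not\subset \mathrm{Supp}\,\Gamma$, where $\Gamma|_{Y}$ is an $m$-basis type divisor of the refinement $W^{Y}_{\cdot,\cdot}$ and $\mathrm{ord}_{D}(B)=S_{m}(D)$. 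Setting $\lambda=\min\bigl(1/S_{m}(Y),\ \delta_{Z,m}(Y,W^{Y}_{\cdot,\cdot})\bigr)$ (up to the usual errors that vanish as $m\to\infty$), inversion of adjunction for the plt pair $(X,Y)$ shows that $(X,\lambda B)$ is log canonical at points of $Z$, whence $A_{X}(D)\geq \lambda\,\mathrm{ord}_{D}(B)=\lambda S_{m}(D)$, and letting $m\to\infty$ gives the theorem. So the refinement $W^{Y}_{\cdot,\cdot}$ and the minimum of two terms do enter as in your sketch, but the mechanism is inversion of adjunction applied to compatibly chosen basis type divisors, not a valuation-theoretic subadditivity $S(L,D)\leq c\,S_{L}(Y)+S(W^{Y}_{\cdot,\cdot};D')$; that subadditivity, as stated, does not make sense for general $D$, so the gap is structural rather than merely technical.
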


Now let us go back to our case $X = \PE$ and $[L] = a\xi + bf$. Notice that $\mathrm{Vol}(L) = na^{n-1}(a\mu+b)$.

Of course we have

\[\delta(X,L) = \inf_{P\in X}\delta_{P}(X,L).\]

Now fix a (closed) point $P\in X$. We have a unique fiber $F\cong \mathbb{P}^{n-1}$ which goes through $P$. Now by Theorem \ref{abbanzhuang}, we have

\[\delta_{P}(X,L) \geq \min\left(\frac{1}{S(L,F)},\delta_{P}(F,W^{F}_{\cdot,\cdot})\right).\]

We try to understand $\delta_{P}(F,W^{F}_{\cdot,\cdot}))$.

\begin{proposition}\label{deltap}
    For the setup as above, we have
    \[\frac{n(a\mu+b)}{a(a\mu_{\max}+b)}\leq \delta_{P}(F,W^{F}_{\cdot,\cdot}))\leq \frac{n(a\mu+b)}{a(a\mu_{\min}+b)}.\]
\end{proposition}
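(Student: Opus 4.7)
The plan is to produce matching estimates on the graded pieces $W^F_{m,j}$ that yield the two bounds: an upper-bound comparison of the refinement with the full linear series on $F$ (for the lower bound on $\delta_P$) and a lower-bound comparison with its fully saturated range (for the upper bound). Since $F\cong\mathbb{P}^{n-1}$ is a fiber, $F|_F=0$, so $M=0$ and $mL|_F = \mathcal{O}_F(ma)$. Pushing forward along $\pi:X\to C$ and evaluating at the point $p=\pi(F)$ realises $W^F_{m,j}$ as the image in $H^0(F,\mathcal{O}_F(ma))\cong\operatorname{Sym}^{ma}(E_p)$ of the evaluation map on $H^0(C,\operatorname{Sym}^{ma}E\otimes \mathcal{O}_C(mD-jp))$, where $D$ is a divisor on $C$ of degree $b$. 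Two asymptotic features drive the argument: first, $W^F_{m,j}=0$ for $j>m(a\mu_{\max}+b)$, since $mL-jF$ then leaves the pseudo-effective cone; second, $W^F_{m,j}=H^0(F,\mathcal{O}_F(ma))$ whenever $j\le m(a\mu_{\min}+b)$ and $m$ is sufficiently large, because in that range every Harder--Narasimhan factor of $\operatorname{Sym}^{ma}E\otimes\mathcal{O}_C((mb-j)p)$ has non-negative slope, hence slope at least $2g$ for $m\gg 0$, and the bundle is globally generated.

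For the lower bound, I would fix an arbitrary prime divisor $\Delta$ over $F$ whose center contains $P$ and combine the vanishing beyond $m(a\mu_{\max}+b)$ with the trivial containment $W^F_{m,j}\subseteq H^0(F,\mathcal{O}_F(ma))$ to obtain
\[
\operatorname{Vol}\bigl(\mathcal{F}^t_\Delta W^F_{\cdot,\cdot}\bigr) \;\le\; n(a\mu_{\max}+b)\operatorname{Vol}\bigl(\mathcal{F}^t_\Delta\tilde V^F_\cdot\bigr),
\]
where $\tilde V^F_\cdot=\bigoplus_m H^0(F,\mathcal{O}_F(ma))$ is the standard graded linear series of $\mathcal{O}_F(a)$. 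Integrating in $t$ and dividing by $\operatorname{Vol}(L)=na^{n-1}(a\mu+b)$ yields
\[
S\bigl(W^F_{\cdot,\cdot},\Delta\bigr) \;\le\; \frac{a\mu_{\max}+b}{a\mu+b}\, S\bigl(\tilde V^F_\cdot,\Delta\bigr).
\]
Since $\delta_P(\mathbb{P}^{n-1},\mathcal{O}(a))=n/a$ is attained by a hyperplane and is independent of $P$, one has $S(\tilde V^F_\cdot,\Delta)\le \frac{a}{n}A_F(\Delta)$, and combining the two inequalities gives $S(W^F_{\cdot,\cdot},\Delta)\le \frac{a(a\mu_{\max}+b)}{n(a\mu+b)}A_F(\Delta)$, from which the claimed lower bound on $\delta_P$ follows.

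For the upper bound it is enough to exhibit a single divisor realising the value: take $\Delta=H$ to be any hyperplane in $F$ through $P$, so $A_F(H)=1$. By the saturation property, for $j\le m(a\mu_{\min}+b)$ and $m$ large we have $\mathcal{F}^{mt}_H W^F_{m,j} = H^0(F,\mathcal{O}_F(m(a-t)))$, producing the reverse asymptotic estimate
\[
\operatorname{Vol}\bigl(\mathcal{F}^t_H W^F_{\cdot,\cdot}\bigr)\;\ge\; n(a\mu_{\min}+b)(a-t)^{n-1}, \qquad t\in[0,a].
\]
Integrating yields $S(W^F_{\cdot,\cdot},H)\ge \frac{a(a\mu_{\min}+b)}{n(a\mu+b)}$, whence $\delta_P(F,W^F_{\cdot,\cdot})\le A_F(H)/S(W^F_{\cdot,\cdot},H)$ is bounded above as required.

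The main obstacle is establishing the saturation property with a sharp threshold, i.e.\ surjectivity of $H^0(C,\operatorname{Sym}^{ma}E\otimes\mathcal{O}_C((mb-j)p))\to\operatorname{Sym}^{ma}(E_p)$ for all $j\le m(a\mu_{\min}+b)$ (up to $O(1)$ corrections in $g$) and $m\gg 0$. This reduces to controlling the Harder--Narasimhan slopes of symmetric powers of $E$ uniformly in $m$ and invoking the standard global-generation criterion on curves. Once the two endpoint descriptions of $W^F_{m,j}$ are in place, both estimates follow from routine asymptotic Riemann--Roch on $\mathbb{P}^{n-1}$.
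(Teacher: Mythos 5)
Your proposal is correct and follows essentially the same strategy as the paper: bound the range of nonzero graded pieces by the pseudo-effective threshold $j\le m(a\mu_{\max}+b)$ for the lower bound, establish saturation $W^F_{m,j}=H^0(F,\mathcal{O}_F(ma))$ for $j\lesssim m(a\mu_{\min}+b)$ for the upper bound, and compare both with the complete linear series of $\mathcal{O}_{\mathbb{P}^{n-1}}(a)$ using $\delta(\mathbb{P}^{n-1},\mathcal{O}(a))=n/a$. The only divergence is the saturation step, which the paper gets from Kodaira vanishing on $X$ via the canonical bundle formula and the short exact sequence $0\to\mathcal{O}(mL-(j+1)F)\to\mathcal{O}(mL-jF)\to\mathcal{O}_{\mathbb{P}^{n-1}}(am)\to 0$, while you push forward to $C$ and use Harder--Narasimhan slopes of $\operatorname{Sym}^{ma}E$ together with global generation (equivalently $H^1$ vanishing) on the curve; both give the same threshold up to $O(g)$, which is harmless in the limit.
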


\begin{proof}
    We have $mL-jF|_{F} = \mathcal{O}_{\mathbb{P}^{n-1}}(am)$. So 

    \[W_{m,j}^{F} = \mathrm{Im}(H^{0}(X,mL-jF))\rightarrow H^{0}(\mathbb{P}^{n-1},\mathcal{O}(am))\subset  H^{0}(\mathbb{P}^{n-1},\mathcal{O}(am)).\]

We only need to consider $0\leq j\leq a\mu_{\max}m+bm$ since we understand the pseudoeffective cone of $X$. For a prime divisor $D$ over $F$, we have

\[\sum_{j}\mathrm{dim}\mathcal{F}_{D}^{mt}W_{m,j}^{F} \leq ([a\mu_{\max}m+bm]+1)\mathrm{dim}\{s\in H^{0}(\mathbb{P}^{n-1},\mathcal{O}(am))|\mathrm{ord}_{D}(s)\geq mt\}.\]

So
\begin{align*}
    \Vol(\mathcal{F}_{D}^{t}W_{\cdot,\cdot}^{F}) &= \lim_{m\rightarrow \infty}\frac{\sum_{j\geq 0}\mathrm{dim}(\mathcal{F}_{D}^{mt}W_{m,j}^{F})}{m^{n}/n!}\\
    &\leq \lim_{m\rightarrow \infty}\frac{1}{m^{n}/n!}([a\mu_{\max}m+bm]+1)\mathrm{dim}\{s\in H^{0}(\mathbb{P}^{n-1},\mathcal{O}(am))|\mathrm{ord}_{D}(s)\geq mt\}\\
   &= n\cdot\lim_{m\rightarrow \infty} \frac{[a\mu_{\max}m+bm]+1}{m}\cdot \frac{\mathrm{dim}\{s\in H^{0}(\mathbb{P}^{n-1},\mathcal{O}(am))|\mathrm{ord}_{D}(s)\geq mt\}}{m^{n-1}/(n-1)!}\\
   &= n\cdot (a\mu_{\max} + b)\Vol(\mathcal{O}_{\mathbb{P}^{n-1}}(a)-tD) \text{.}
\end{align*}

Then

\begin{align*}
    S(W_{\cdot,\cdot}^{F},D) &= \frac{1}{\Vol(L)}\int_{0}^{\infty}\Vol(\mathcal{F}_{D}^{t}W_{\cdot,\cdot}^{F})dt\\
    &\leq \frac{1}{\Vol(L)}\int_{0}^{\infty}n\cdot(a\mu_{\max}+b)\Vol(\mathcal{O}_{\mathbb{P}^{n-1}}(a)-tD)\\
    &= \frac{n(a\mu_{\max}+b)}{\Vol(L)}\cdot \Vol(\mathcal{O}_{\mathbb{P}^{n-1}}(a))\cdot S(\mathcal{O}_{\mathbb{P}^{n-1}}(a),D).
\end{align*}

So 

\begin{align*}
  \delta_{P}(F,W^{F}_{\cdot,\cdot})) &= \inf_{\substack{D/F,\\ P\in C_{F}(D)}}\frac{A_{F}(D)}{S(W_{\cdot,\cdot}^{F},D)}\\
  &\geq \frac{1}{n(a\mu_{\max}+b)}\cdot\inf_{\substack{D/F,\\ P\in C_{F}(D)}}\frac{\Vol(L)}{\Vol(\mathcal{O}_{\mathbb{P}^{n-1}}(a))}\frac{A_{F}(D)}{S(\mathcal{O}_{\mathbb{P}^{n-1}}(a),D)}\\
  & = \frac{1}{n(a\mu_{\max}+b)}\cdot\frac{\Vol(L)}{\Vol(\mathcal{O}_{\mathbb{P}^{n-1}}(a))}\delta_{P}(\mathbb{P}^{n-1},\mathcal{O}(a)) \text{.}\\
\end{align*}

It is known that $\delta(\mathbb{P}^{n-1},\mathcal{O}(1)) = n$ by \cite{thresholds2020}. By symmetry we know that $\delta_{P}(\mathbb{P}^{n-1},\mathcal{O}(a)) = \delta(\mathbb{P}^{n-1},\mathcal{O}(a))$. So $\delta_{P}(\mathbb{P}^{n-1},\mathcal{O}(a)) = \frac{n}{a}$. Thus we have

\[\frac{n(a\mu+b)}{a(a\mu_{\max}+b)}\leq \delta_{P}(F,W^{F}_{\cdot,\cdot})).\]

To have the other inequality, we look at the short exact sequence

\[0\rightarrow \mathcal{O}_{X}(mL-(j+1)F)\rightarrow \mathcal{O}_{X}(mL-jF)\rightarrow \mathcal{O}_{\mathbb{P}^{n-1}}(am)\rightarrow 0\]

Then we consider the corresponding long exact sequence. We see easily that, if $H^{1}(X,mL-(j+1)F) = 0$, we have 

\[W_{m,j}^{F} = \mathrm{Im}(H^{0}(X,mL-jF))\rightarrow H^{0}(\mathbb{P}^{n-1},\mathcal{O}(am)) = H^{0}(\mathbb{P}^{n-1},\mathcal{O}(am)) \text{.}\]

We know that

\[K_{X} = -nH_{E} + \pi^{*}(K_{C}+det(E)),\]
where $\pi:\PE\rightarrow C$ is the natural projection. So, $[K_{X}] = -n\xi + (2g-2+d)f$ and $[mL-(j+1)F] = (am+n)\xi + (bm+1-j-2g-n\mu)f + [K_{X}].$ Then by Kodaira vanishing theorem, when $j< \mu_{\min}(am+n)+bm+1-2g-n\mu$, we have $H^{1}(X,mL-(j+1)F) = 0$. This implies that 

\[\sum_{j}\mathrm{dim}\mathcal{F}_{D}^{mt}W_{m,j}^{F} \geq [(a\mu_{\min}+b)m+n\mu_{\min}+1-2g-n\mu]\mathrm{dim}\{s\in H^{0}(\mathbb{P}^{n-1},\mathcal{O}(am))|\mathrm{ord}_{D}(s)\geq mt\} \]
for $D$ over $F$.

Finally we can follow the method above to see that
\[\delta_{P}(F,W^{F}_{\cdot,\cdot}))\leq \frac{n(a\mu+b)}{a(a\mu_{\min}+b)}.\]
\end{proof}

Since for every point $P$, we have

\[\delta_{P}(X,L)\geq\min\left(\frac{1}{S(L,F)},\delta_{P}(F,W^{F}_{\cdot,\cdot})\right)\geq \min\left(\frac{1}{S(L,F)},\frac{n(a\mu+b)}{a(a\mu_{\max}+b)}\right),\]
and $S(L,F)$ is independent of the fiber $F$, so we get

\begin{proposition}\label{AZmethod}
    With the above setup, we have

    \[\delta(X,L)\geq \min\left(\frac{1}{S(L,F)},\frac{n(a\mu+b)}{a(a\mu_{\max}+b)}\right).\]
\end{proposition}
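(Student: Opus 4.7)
The plan is to assemble the bound by combining the two ingredients already in place: the Abban--Zhuang restriction inequality from Theorem \ref{abbanzhuang}, and the lower bound on $\delta_{P}(F,W_{\cdot,\cdot}^{F})$ established in Proposition \ref{deltap}. Since $\delta(X,L)=\inf_{P\in X}\delta_{P}(X,L)$, it suffices to produce a lower bound on $\delta_{P}(X,L)$ that is uniform in the closed point $P$.

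First I would fix a closed point $P\in X$ and let $F$ be the unique fiber of $\pi:\PE\to C$ passing through $P$. Since $F$ is a Cartier prime divisor in $X$ and $F\cong\mathbb{P}^{n-1}$ is klt, Theorem \ref{abbanzhuang} applies with $Y=F$ and $Z=\{P\}$, yielding
\[\delta_{P}(X,L)\geq \min\left(\frac{1}{S(L,F)},\,\delta_{P}(F,W_{\cdot,\cdot}^{F})\right).\]
Next I would substitute the lower bound from Proposition \ref{deltap} into the second slot, which gives
\[\delta_{P}(X,L)\geq \min\left(\frac{1}{S(L,F)},\,\frac{n(a\mu+b)}{a(a\mu_{\max}+b)}\right).\]

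The final step is to take the infimum over $P\in X$. The right-hand side is manifestly independent of the choice of $P$: all fibers of $\pi$ are numerically equivalent, so $S(L,F)$ does not depend on which fiber is chosen, and the second quantity is determined purely by the numerical data $a,b,n,\mu,\mu_{\max}$. Passing to the infimum therefore preserves the inequality and produces the claimed bound.

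There is no genuine obstacle at this stage of the argument; the substantive work was carried out in Proposition \ref{deltap}, where the volume of the refined graded linear series on $F$ was bounded by $n(a\mu_{\max}+b)\cdot\Vol(\mathcal{O}_{\mathbb{P}^{n-1}}(a)-tD)$ and then combined with the known value $\delta(\mathbb{P}^{n-1},\mathcal{O}(a))=n/a$. The present proposition is essentially the packaging of that estimate into a global statement about $\delta(X,L)$.
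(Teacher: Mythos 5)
Your proposal is correct and follows essentially the same route as the paper: apply Theorem \ref{abbanzhuang} fiberwise with $Y=F$ and $Z=\{P\}$, plug in the lower bound from Proposition \ref{deltap}, and take the infimum over $P$, using that $S(L,F)$ and the second term are independent of the fiber. Nothing further is needed.
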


A simple corollary of Proposition \ref{deltap} is

\begin{corollary}\label{sstable}
    If $E$ is semistable, with setup above, we have

    \[\delta_{P}(F,W^{F}_{\cdot,\cdot})) = \frac{\Vol(L)}{a^{n}(a\mu+b)} = \frac{n}{a} \text{.}\]
\end{corollary}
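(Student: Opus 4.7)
The plan is to invoke Proposition \ref{deltap} and observe that its upper and lower bounds coincide when $E$ is semistable. First, I recall that a semistable bundle $E$ has the trivial one-step Harder-Narasimhan filtration $0 = E_{0} \subset E_{1} = E$, so by definition $\mu_{\max} = \mu(E_{1}) = \mu$ and $\mu_{\min} = \mu(E_{1}/E_{0}) = \mu$. Both extremal slopes collapse to the slope of $E$ itself.

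Next, I substitute $\mu_{\max} = \mu_{\min} = \mu$ into the two-sided inequality
\[\frac{n(a\mu+b)}{a(a\mu_{\max}+b)}\leq \delta_{P}(F,W^{F}_{\cdot,\cdot}) \leq \frac{n(a\mu+b)}{a(a\mu_{\min}+b)}\]
from Proposition \ref{deltap}. The lower and upper bounds then both become $\frac{n(a\mu+b)}{a(a\mu+b)} = \frac{n}{a}$, which forces $\delta_{P}(F,W^{F}_{\cdot,\cdot}) = \frac{n}{a}$.

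Finally, to reconcile with the first expression in the statement, I plug in the standard intersection number $\Vol(L) = na^{n-1}(a\mu+b)$ noted at the beginning of this section, which gives $\frac{\Vol(L)}{a^{n}(a\mu+b)} = \frac{n}{a}$. There is no genuine obstacle here: once Proposition \ref{deltap} is in hand, this corollary is a bookkeeping observation that the sandwich of bounds is tight precisely in the semistable regime.
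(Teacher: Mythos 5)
Your proposal is correct and is essentially the argument the paper intends: the corollary is stated as an immediate consequence of Proposition \ref{deltap}, where for semistable $E$ one has $\mu_{\max}=\mu_{\min}=\mu$ so the two bounds collapse to $\frac{n}{a}$, and the identification with $\frac{\Vol(L)}{a^{n}(a\mu+b)}$ follows from $\Vol(L)=na^{n-1}(a\mu+b)$. No gaps.
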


\section{Zariski decomposition for projective bundles and $S(L,F)$}

From the definition of the $\delta-$invariant, we have:

\[\delta(X,L)\leq \frac{A_{X}(F)}{S(L,F)} = \frac{1}{S(L,F)}.\]

Notice that $S(L,F)$ also appears in the lower bound from the last section, so it is clear that it is very important to understand $S(L,F)$. Follow the spirit of \cite{calabithree}, we want to use Zariski decomposition to compute volumes and then to compute $S(L,F)$. Hence we need to understand Zariski decomposition for a projective bundle over a curve. This is discussed in detail in \cite{nakayama2004zariski}. We recall some of the results in \cite{nakayama2004zariski}.

Since the general case can be really complicated and hard to compute, we only discuss an easy case here, namely $l=2$. That is to say, the Harder-Narasimhan filtration of $E$ is

\[0=E_{0}\subset E_{1} \subset E_{2}=E.\]

We have the following diagram

\[
\begin{tikzcd}
 & \mathrm{Bl}_{\mathbb{P}(E/E_{1})}\PE \arrow[dl,"\rho"] \arrow[dr,"\pi"] \\
\PE  \arrow[rr,dotted] && \mathbb{P}(E_{1})
\end{tikzcd}
\]

Let $D$ be the exception divisor of $\rho$ (in the case where $E_{1}$ is of rank 1, $D$ is just $\PEE$), $H_{E} = \mathcal{O}_{\PE}(1)$, $H_{E_{1}} = \mathcal{O}_{\mathbb{P}(E_{1})}(1)$. We have the relation $\rho^{*}(H_{E}) = \pi^{*}(H_{E_{1}}) + D$. The following proposition is from \cite{nakayama2004zariski}.

\begin{proposition}[\cite{nakayama2004zariski}]\label{zaridec}
    Every pseudo-effective $\mathbb{R}-$divisor on $X = \PE$ admits a Zariski decomposition. For $H_{E}-tF$ with $t\leq \mu_{\min}$, the divisor itself is nef. For $H_{E}-tF$ with $\mu_{\min}\leq t\leq \mu_{\max}$, the positive part of its Zariski decomposition is $\alpha\pi^{*}(H_{E_{1}}-\mu_{\max}F) + (1-\alpha)\rho^{*}(H_{E}-\mu_{\min}F)$, where $\alpha$ satisfies $t = \alpha\mu_{\max}+(1-\alpha)\mu_{\min}.$ The negative part is $\alpha D$.
\end{proposition}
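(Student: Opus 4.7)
The plan is to exhibit the explicit candidate for the Zariski decomposition of $\rho^*(H_E - tF)$ and verify its defining properties: the algebraic identity for the sum, nefness of the positive part, and the identification of $\alpha D$ as the negative part in Nakayama's $\sigma$-decomposition. The nefness of $H_E - tF$ for $t \le \mu_{\min}$ is immediate from the description of the nef cone of $\mathbb{P}(E)$ recalled earlier in the excerpt: the class $\xi - tf = (\xi - \mu_{\min}f) + (\mu_{\min}-t)f$ is a nonnegative combination of the two generators.

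For $\mu_{\min} < t < \mu_{\max}$, I would set $\alpha = (t-\mu_{\min})/(\mu_{\max}-\mu_{\min}) \in (0,1)$ and first verify the algebraic identity $P + \alpha D = \rho^*(H_E - tF)$ by substituting $\pi^*H_{E_1} = \rho^*H_E - D$ into the proposed $P$, and using that $\pi^*F$ and $\rho^*F$ coincide (both being pullbacks from $C$). Nefness of $P$ then follows formally: since $E_1$ is semistable (first piece of the HN filtration), the class $H_{E_1} - \mu_{\max}F$ lies on the boundary of the nef cone of $\mathbb{P}(E_1)$ and $H_E - \mu_{\min}F$ lies on the boundary of the nef cone of $\mathbb{P}(E)$; both pullbacks are nef, and hence their convex combination $P$ is nef.

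The main obstacle is confirming that $(P, \alpha D)$ is actually Nakayama's $\sigma$-decomposition, not merely a nef-plus-effective splitting. Since $P$ is nef one has $\sigma_D(P) = 0$, which gives $\sigma_D(\rho^*(H_E - tF)) \le \alpha$. For the reverse inequality, I would analyze the base locus of $|m(H_E - tF)|$ on $\mathbb{P}(E)$ through the direct image $\pi_*\mathcal{O}_{\mathbb{P}(E)}(m) = S^m E$ together with the filtration induced by the HN filtration $E_1 \subset E$: the slope gap $\mu_{\max} > t > \mu_{\min}$ forces sections to concentrate on $S^m E_1$-related subsheaves, producing asymptotic vanishing of order exactly $\alpha m$ along $\mathbb{P}(E/E_1)$. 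This identifies $(P, \alpha D)$ as the Zariski decomposition, with the endpoint cases $t = \mu_{\min}$ and $t = \mu_{\max}$ following by continuity in $t$.
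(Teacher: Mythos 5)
The paper does not actually prove this proposition: it is quoted verbatim from Nakayama's book, so there is no in-paper argument to compare against, and your attempt amounts to reconstructing the source's proof. Your first two steps are correct and complete: the identity $P+\alpha D=\rho^{*}(H_{E}-tF)$ follows from $\rho^{*}H_{E}=\pi^{*}H_{E_{1}}+D$ exactly as you say, and nefness of $P$ follows because $E_{1}$ is semistable of slope $\mu_{\max}$ (so $H_{E_{1}}-\mu_{\max}F$ is nef on $\mathbb{P}(E_{1})$) and $H_{E}-\mu_{\min}F$ is nef on $\mathbb{P}(E)$. You also correctly isolate the real issue, namely that a nef-plus-effective splitting is not automatically the Zariski ($\sigma$-)decomposition, and your bound $\sigma_{D}(\rho^{*}(H_{E}-tF))\leq\alpha$ is fine.

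What remains only sketched is the reverse bound, and to make it rigorous you need the following chain: sections of $m(H_{E}-tF)$ are sections of $S^{m}E\otimes\mathcal{O}_{C}(-tm\,p)$ via the projection $p_{1}:\mathbb{P}(E)\to C$ (not $\pi$, which in this paper is the map to $\mathbb{P}(E_{1})$); vanishing to order $\geq k$ along $\mathbb{P}(E/E_{1})$ means lying in the image of $S^{k}E_{1}\otimes S^{m-k}E\to S^{m}E$; the quotient by that subsheaf is filtered with graded pieces $S^{j}E_{1}\otimes S^{m-j}(E/E_{1})$, $j<k$, which are semistable (a genuine input, valid in characteristic zero) of slope $j\mu_{\max}+(m-j)\mu_{\min}-tm<0$ for $j<\alpha m$, hence have no sections; and finally $\operatorname{ord}_{D}$ of a pulled-back section equals the multiplicity along the smooth center. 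This yields $\sigma_{D}\geq\alpha-O(1/m)$, hence equality, and since $P$ is nef the pair $(P,\alpha D)$ is the Zariski decomposition; the endpoints and general pseudo-effective classes follow as you indicate. So the route is sound and makes the paper self-contained where it currently relies on a citation, but the semistability-of-symmetric-powers step should be stated explicitly rather than left implicit.
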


Now we have to understand the intersection theory on $\mathrm{Bl}_{\mathbb{P}(E/E_{1})}\mathbb{P}(E)$ to compute volumes. We have to understand intersection numbers of the form $\rho^{*}(H_{E})^{a}\cdot \rho^{*}(F)^{b}\cdot D^{c}$, where $a+b+c=n.$ We always have $\rho^{*}(F)^{2}=0$, so we only need to consider $b=0,a+c=n$ or $b=1,a+c=n-1$. 

Now let $i_{D}:D\hookrightarrow \mathrm{Bl}_{\mathbb{P}(E/E_{1})}\mathbb{P}(E)$ be the closed embedding of the exceptional divisor and $\varphi:D\rightarrow \mathbb{P}(E/E_{1})$ be the projection. We know that $D\cong \mathcal{P}(N_{\mathbb{P}(E/E_{1})/\PE})$. We remark again that we use $\mathbb{P}$ to represent projective bundles of hyperplanes, whereas $\mathcal{P}$ is used to represent projective bundles of lines. We know that $\mathcal{O}_{\blup}(D)\cong \mathcal{O}_{\blup}(-1)$. Because $\mathrm{Proj}$ construction behaves well under base change, we have $i_{D}^{*}(\mathcal{O}_{\blup}(D))\cong \mathcal{O}_{\mathcal{P}(N_{\mathbb{P}(E/E_{1})/\PE})}(-1)$.

The following lemma should be standard and known to experts.

\begin{lemma}
    Let $0\rightarrow E_{1}\rightarrow E\rightarrow E/E_{1}\rightarrow 0$ be an exact sequence of vector bundles over a smooth projective variety $C$. Let $p_{1}:\PE\rightarrow C$ and $p_{2}:\mathbb{P}(E/E_{1})\rightarrow C$ be the corresponding projections. Then the normal bundle $N_{\mathbb{P}(E/E_{1})/\PE}$ is isomorphic to $p_{2}^{*}(E_{1}^{*})\otimes \mathcal{O}_{\mathbb{P}(E/E_{1})}(1)$.
\end{lemma}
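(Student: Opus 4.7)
The plan is to obtain $N_{\mathbb{P}(E/E_{1})/\mathbb{P}(E)}$ as the cokernel in the relative tangent sequence for $i\colon \mathbb{P}(E/E_{1})\hookrightarrow \mathbb{P}(E)$ over $C$. In the Grothendieck ``hyperplane'' convention used in the paper, I would first record the tautological sequences $0\to \mathcal{S}_{E}\to p_{1}^{*}E\to \mathcal{O}_{\mathbb{P}(E)}(1)\to 0$ and $0\to \mathcal{S}_{E/E_{1}}\to p_{2}^{*}(E/E_{1})\to \mathcal{O}_{\mathbb{P}(E/E_{1})}(1)\to 0$, together with the relative Euler identifications $T_{\mathbb{P}(E)/C}\cong \mathcal{S}_{E}^{*}\otimes \mathcal{O}_{\mathbb{P}(E)}(1)$ and the analogue on $\mathbb{P}(E/E_{1})$. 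The embedding $i$ is the one attached to the surjection $E\twoheadrightarrow E/E_{1}$, so $i^{*}\mathcal{O}_{\mathbb{P}(E)}(1)\cong \mathcal{O}_{\mathbb{P}(E/E_{1})}(1)$ and the pullback of the tautological quotient $p_{1}^{*}E\twoheadrightarrow \mathcal{O}_{\mathbb{P}(E)}(1)$ factors as $p_{2}^{*}E\twoheadrightarrow p_{2}^{*}(E/E_{1})\twoheadrightarrow \mathcal{O}_{\mathbb{P}(E/E_{1})}(1)$.

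Applying the snake lemma to this factorization, combined with $0\to p_{2}^{*}E_{1}\to p_{2}^{*}E\to p_{2}^{*}(E/E_{1})\to 0$, produces the short exact sequence $0\to p_{2}^{*}E_{1}\to i^{*}\mathcal{S}_{E}\to \mathcal{S}_{E/E_{1}}\to 0$. Dualizing and tensoring with $\mathcal{O}_{\mathbb{P}(E/E_{1})}(1)$, the Euler identifications convert this into $0\to T_{\mathbb{P}(E/E_{1})/C}\to T_{\mathbb{P}(E)/C}|_{\mathbb{P}(E/E_{1})}\to p_{2}^{*}E_{1}^{*}\otimes \mathcal{O}_{\mathbb{P}(E/E_{1})}(1)\to 0$. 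Since $i$ is a $C$-morphism (indeed $p_{1}\circ i=p_{2}$, so no ``horizontal'' directions contribute to the normal), this matches the standard normal bundle sequence for $i$, and we read off $N_{\mathbb{P}(E/E_{1})/\mathbb{P}(E)}\cong p_{2}^{*}E_{1}^{*}\otimes \mathcal{O}_{\mathbb{P}(E/E_{1})}(1)$.

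The main obstacle is a naturality check: one must verify that the map $T_{\mathbb{P}(E/E_{1})/C}\hookrightarrow T_{\mathbb{P}(E)/C}|_{\mathbb{P}(E/E_{1})}$ manufactured by the dualize-and-twist procedure genuinely coincides with the derivative $di$ of the embedding. This can be handled by observing that both arrows are determined by the same tautological data, and then checking agreement on an affine local trivialization of $E$ compatible with the subbundle $E_{1}$, where the entire computation reduces to the classical linear-algebra statement for the inclusion $\mathbb{P}(V/W)\hookrightarrow \mathbb{P}(V)$ of a linear subspace.
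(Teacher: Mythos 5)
Your proof is correct and takes essentially the same route as the paper's: both compare the relative Euler/tautological data of $\mathbb{P}(E)$ restricted to $\mathbb{P}(E/E_{1})$ with that of $\mathbb{P}(E/E_{1})$ and extract the normal bundle as a cokernel via the snake lemma, the only cosmetic difference being that you dualize the tautological subbundle sequence $0\to p_{2}^{*}E_{1}\to i^{*}\mathcal{S}_{E}\to \mathcal{S}_{E/E_{1}}\to 0$ and twist, while the paper directly compares the two Euler sequences of the form $0\to\mathcal{O}\to p^{*}E^{*}\otimes\mathcal{O}(1)\to T\to 0$. The naturality check you flag (that the resulting inclusion of relative tangent bundles is the differential of the embedding) is exactly the point the paper leaves implicit in calling its diagram ``natural,'' so your treatment is at least as careful.
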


\begin{proof}
    We have the relative Euler sequence for $\mathbb{P}(E)$

    \[0\rightarrow \mathcal{O}_{\mathbb{P}(E)}\rightarrow p_{1}^{*}E^{*}\otimes \mathcal{O}_{\PE}(1)\rightarrow T_{\PE/C}\rightarrow 0.\]

    Let $j:\PEE\hookrightarrow \PE$ be the closed embedding. We restrict the above exact sequence to $\PEE$ and get

    \[0\rightarrow \mathcal{O}_{\PEE}\rightarrow p_{2}^{*}E^{*}\otimes \mathcal{O}_{\PEE}(1)\rightarrow j^{*}(T_{\PE/C})\rightarrow 0 \text{.}\]

    We also have the relative Euler sequence for $\mathbb{P}(E/E_{1})$

    \[0\rightarrow \mathcal{O}_{\mathbb{P}(E/E_{1})}\rightarrow p_{2}^{*}(E/E_{1})^{*}\otimes \mathcal{O}_{\PEE}(1)\rightarrow T_{\PEE/C}\rightarrow 0.\]

    Then we have the following natural diagram:

    \begin{tikzcd}
    &0\arrow{r} &\mathcal{O}_{\mathbb{P}(E/E_{1})}\arrow{r} \arrow{d} &p_{2}^{*}(E/E_{1})^{*}\otimes \mathcal{O}_{\PEE}(1)\arrow{r} \arrow{d} &T_{\PEE/C}\arrow{r} \arrow{d} &0\\
    &0\arrow{r} &\mathcal{O}_{\PEE}\arrow{r} &p_{2}^{*}E^{*}\otimes \mathcal{O}_{\PEE}(1)\arrow{r} &j^{*}(T_{\PE/C})\arrow{r} &0
    \end{tikzcd}

    Finally we get what we claimed by snake lemma.
\end{proof}

Now we start to compute $S(L,F)$. We will also compute $S(L,D)$ so that we can have another natural upper bound for $\delta(L).$

What is essential in computing $S(L,F)$ is to compute the following integral

\[\int_{\mu_{\min}}^{\mu_{\max}}\mathrm{Vol}(H_{E}-tF)dt = \int_{0}^{\mu_{\max}-\mu_{\min}}\mathrm{Vol}(H_{E}-(t+\mu_{\min})F)dt.\]

The Zariski decomposition tells us that the positive part of $H_{E}-(t+\mu_{\min})F$ is 
\[\rho^{*}H_{E}-(t+\mu_{\min})\rho^{*}F-\frac{t}{\mu_{\max}-\mu_{\min}}D.\] 

So we need to compute the self-intersection

\begin{align*}
    &(\rho^{*}H_{E}-(t+\mu_{\min})\rho^{*}F-\frac{t}{\mu_{\max}-\mu_{\min}}D)^{n}\\
    =& \sum_{a+b+c=n} \binom{n}{a, b, c}(-(t+\mu_{\min}))^{b}\left(-\frac{t}{\mu_{\max}-\mu_{\min}}\right)^{c}(\rho^{*}H_{E})^{a}(\rho^{*}F)^{b}D^{c}.
\end{align*}

\subsection{Case $c=0$}

In this case, we have $(\rHE)^{n} = n\mu$ and $(\rHE)^{n-1}\rF = 1.$ All the rest are 0.

Correspondingly, we have
\[\int_{0}^{\mu_{\max}-\mu_{\min}}(\rHE)^{n}dt = \int_{0}^{\mu_{\max}-\mu_{\min}}n\mu\text{ }dt = n\mu(\mu_{max}-\mu_{\min})\]
and
\begin{align*}
    \int_{0}^{\mu_{\max}-\mu_{\min}}n(-t-\mu_{\min})(\rHE)^{n-1}\rF dt &= \int_{0}^{\mu_{\max}-\mu_{\min}}n(-t-\mu_{\min})dt\\ &= -n(\mu_{\max}-\mu_{\min})(\frac{\mu_{\max}+\mu_{\min}}{2}) \text{.}
\end{align*}

Putting these two terms together we get

\[\int_{0}^{\mu_{\max}-\mu_{\min}}(\rHE)^{n}dt+\int_{0}^{\mu_{\max}-\mu_{\min}}n(-t-\mu_{\min})(\rHE)^{n-1}\rF dt = (r-\frac{n}{2})(\mu_{\max}-\mu_{\min})^{2}.\]

\subsection{Case $c\neq 0$}

We have the following diagram
\[
\begin{tikzcd}
    &\mathcal{P}(N_{\mathbb{P}(E/E_{1})/\PE})\cong D \arrow[r,"i_{D}"]\arrow[d,"\rho"] &\blup\arrow[d,"\rho"] &\\
    & \PEE \arrow[r,"i"] &\PE  \arrow[r,"p_{1}"] &C
\end{tikzcd}
\]

Then we have

\begin{align*}
    (\rho^{*}H_{E})^{a}(\rho^{*}F)^{b}D^{c} &= (i_{D}^{*}\rHE)^{a}(i_{D}^{*}\rF)^{b}(i_{D}^{*}(\mathcal{O}_{\blup}(D)))^{c-1}\\
    &= (i_{D}^{*}\rHE)^{a}(i_{D}^{*}\rF)^{b}(\mathcal{O}_{\mathcal{P}(N_{\mathbb{P}(E/E_{1})/\PE})}(-1))^{c-1}\\
    &= (-1)^{c-1}(i_{D}^{*}\rHE)^{a}(i_{D}^{*}\rF)^{b}(\mathcal{O}_{\mathcal{P}(N_{\mathbb{P}(E/E_{1})/\PE})}(1))^{c-1}\\
    &= (-1)^{c-1}(i^{*}H_{E})^{a}(i^{*}F)^{b}S_{c-r}(\N),
\end{align*}
where $S_{c-r}$ represents the $(c-r)-$th Segre class. Notice that the above formula is $0$ if $c<r$. Let's denote the morphism between $\PEE$ and $C$ as $p_{2}$ again. Then

\begin{align*}
    S_{c-r}(\N) &= S_{c-r}(p_{2}^{*}(E_{1}^{*})\otimes \PEE(1))\\
    &= \sum_{l=0}^{c-r}(-1)^{c-r-l}\binom{c-1}{r-1+l}S_{l}(p_{2}^{*}E_{1}^{*})c_{1}(\mathcal{O}_{\PEE}(1))^{c-r-l} \text{.}\\
\end{align*}
It follows that,
\begin{align*}
   &(\rho^{*}H_{E})^{a}(\rho^{*}F)^{b}D^{c}\\
   =&(-1)^{c-1}(i^{*}H_{E})^{a}(i^{*}F)^{b}S_{c-r}(\N)\\= &(-1)^{c-1}(i^{*}H_{E})^{a}(i^{*}F)^{b}\sum_{l=0}^{c-r}(-1)^{c-r-l}\binom{c-1}{r-1+l}S_{l}(p_{2}^{*}E_{1}^{*})c_{1}(\mathcal{O}_{\PEE}(1))^{c-r-l}\\
   =& \sum_{l=0}^{c-r}(-1)^{1+r+l}\binom{c-1}{r-1+l}(i^{*}H_{E})^{a+c-r-l}(i^{*}F)^{b}S_{l}(p_{2}^{*}E_{1}^{*})\\
   =&\sum_{l=0}^{c-r}(-1)^{1+r+l}\binom{c-1}{r-1+l}p_{2*}((i^{*}H_{E})^{a+c-r-l}(i^{*}F)^{b})S_{l}(E_{1}^{*}) \text{.}
\end{align*}
Now clearly we need to only consider $l=0$ or $l=1$.

When $b=0$, we see that
\begin{align*}
    (\rho^{*}H_{E})^{n-c}D^{c} = &(-1)^{1+r}\binom{c-1}{r-1}p_{2*}((i^{*}H_{E})^{n-r}) + (-1)^{r}\binom{c-1}{r}p_{2*}((i^{*}H_{E})^{n-r-1})S_{1}(E^{*})\\
    =& (-1)^{1+r}\binom{c-1}{r-1}\mathrm{deg}(E/E_{1}) + (-1)^{r}\binom{c-1}{r}\mathrm{deg}(E_{1})\\
    =&(-1)^{1+r}\binom{c-1}{r-1}(n-r)\mumin + (-1)^{r}\binom{c-1}{r}r\mumax \text{.}
\end{align*}

The corresponding integral is

\begin{align*}
    &\int_{0}^{\mumax-\mumin}\binom{n}{n-c,0,c}(\frac{-t}{\mumax-\mumin})^{c}((-1)^{1+r}\binom{c-1}{r-1}(n-r)\mumin + (-1)^{r}\binom{c-1}{r}r\mumax)dt\\
    =&\binom{n}{c}\frac{(-1)^{c+r}(\mumax-\mumin)}{c+1}(\binom{c-1}{r}r\mumax-\binom{c-1}{r-1}(n-r)\mumin) \text{.}
\end{align*}

When $b=1$, remark that $p_{2*}((i^{*}H_{E})^{n-r-2}i^{*}F)=0$. So, we have

\begin{align*}
     (\rho^{*}H_{E})^{n-c-1}(\rho^{*}F)D^{c} = &(-1)^{1+r}\binom{c-1}{r-1}p_{2*}((i^{*}H_{E})^{n-r-1}i^{*}F) + (-1)^{r}\binom{c-1}{r}p_{2*}((i^{*}H_{E})^{n-r-2}i^{*}F)S_{1}(E^{*})\\
     =&(-1)^{r+1}\binom{c-1}{r-1} \text{.}
\end{align*}

The corresponding integral is

\begin{align*}
    &\int_{0}^{\mumax-\mumin}\binom{n}{n-c-1,1,c}(-t-\mumin)(\frac{-t}{\mumax-\mumin})^{c}(-1)^{r+1}\binom{c-1}{r-1}dt\\
    =& \binom{n}{n-c-1,1,c}(-1)^{c+r}\frac{(\mumax-\mumin)^{2}}{c+2} + \binom{n}{n-c-1,1,c}(-1)^{c+r}\frac{\mumax-\mumin}{c+1}\mumin\binom{c-1}{r-1} \text{.}
\end{align*}

We can take the sum of the above two integrals for $b=0$ and $b=1$ together and then simplify to get

\[    (-1)^{c+r}(\mumax-\mumin)^{2}(\frac{1}{c+2}\binom{n}{n-c-1,1,c}\binom{c-1}{r-1} +\frac{1}{c+1}\binom{n}{c}\binom{c-2}{r-1}(c-1)) \text{.}\]

\subsection{A combinatorial identity}

In total, we have

\begin{align*}
    &\int_{0}^{\mu_{\max}-\mu_{\min}}\mathrm{Vol}(H_{E}-(t+\mu_{\min})F)dt\\ =&(r-\frac{n}{2})(\mu_{\max}-\mu_{\min})^{2}  + \\&\sum_{c=r}^{n} (-1)^{c+r}(\mumax-\mumin)^{2}(\frac{1}{c+2}\binom{n}{n-c-1,1,c}\binom{c-1}{r-1} +\frac{1}{c+1}\binom{n}{c}\binom{c-2}{r-1}(c-1)) \text{.}\\ 
\end{align*}

Notice that we have

\begin{align*}
&(-1)^{c+r}\frac{1}{c+2}\binom{n}{n-c-1,1,c}\binom{c-1}{r-1} + (-1)^{c+r+1}\binom{n}{c+1}\binom{c-1}{r-1}c\\ = &(-1)^{c+r+1}\frac{1}{c+2}\binom{c-1}{r-1}\binom{n}{c+1}.
\end{align*}

So we can simplify to 

\begin{lemma}

We have
    \begin{align*}
&\int_{0}^{\mu_{\max}-\mu_{\min}}\mathrm{Vol}(H_{E}-(t+\mu_{\min})F)dt\\ 
=&(\mu_{\max}-\mu_{\min})^{2}(r-\frac{n}{2}+\sum_{c=r+1}^{n}\frac{1}{c+1}\binom{c-2}{r-1}\binom{n}{c}(-1)^{c+r+1})\\
    =& (\mu_{\max}-\mu_{\min})^{2}(r-\frac{n}{2}+\sum_{c=r+1}^{n}\frac{1}{n+1}\binom{c-2}{r-1}\binom{n+1}{c+1}(-1)^{c+r+1})\text{.}
\end{align*}
\end{lemma}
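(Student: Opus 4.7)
The plan is to assemble the pieces computed in the two preceding subsections into a single sum and then collapse that sum by an index-shifting argument. The case $c=0$ already contributed $(r-n/2)(\mu_{\max}-\mu_{\min})^{2}$, and the case $c\neq 0$ produced, for each $c$ in the range $r\leq c\leq n$ (the expression vanishes identically for $c<r$ because $S_{c-r}(N)=0$), the two-part expression
\[(-1)^{c+r}(\mu_{\max}-\mu_{\min})^{2}\left(\tfrac{1}{c+2}\binom{n}{n-c-1,1,c}\binom{c-1}{r-1}+\tfrac{c-1}{c+1}\binom{n}{c}\binom{c-2}{r-1}\right).\]
So the integral equals $(\mu_{\max}-\mu_{\min})^{2}$ times the constant $r-n/2$ plus the sum of these two-part terms over $c$ from $r$ to $n$.

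The key step is to rewrite both halves of the two-part coefficient through a common binomial and then telescope. From the multinomial identity $\binom{n}{n-c-1,1,c}=(c+1)\binom{n}{c+1}$, the first half becomes $\tfrac{c+1}{c+2}\binom{n}{c+1}\binom{c-1}{r-1}$. Applying the substitution $c\mapsto c+1$ inside the second half (so that its binomial factor becomes $\binom{n}{c+1}\binom{c-1}{r-1}$, carrying a sign flip to $(-1)^{c+r+1}$ and coefficient $\tfrac{c}{c+2}$) pairs the two halves at every intermediate index. The boundary terms are harmless: at $c=r$ the second half vanishes because $\binom{r-2}{r-1}=0$, and at $c=n$ the first half vanishes because $\binom{n}{n+1}=0$. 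At each matched index the combined coefficient collapses to
\[(-1)^{c+r}\binom{n}{c+1}\binom{c-1}{r-1}\left(\tfrac{c+1}{c+2}-\tfrac{c}{c+2}\right)=(-1)^{c+r}\tfrac{1}{c+2}\binom{n}{c+1}\binom{c-1}{r-1},\]
and renaming $c+1\mapsto c$ in the resulting single sum produces the first form stated in the lemma.

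The second equality is just the elementary rewriting $\tfrac{1}{c+1}\binom{n}{c}=\tfrac{1}{n+1}\binom{n+1}{c+1}$, both sides being equal to $\tfrac{n!}{(c+1)!(n-c)!}$. The main technical hazard I anticipate is keeping the signs straight through the index shift and verifying carefully that the two boundary terms do vanish; the rest is a direct algebraic manipulation with no novel ingredients.
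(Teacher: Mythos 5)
Your proposal is correct and follows essentially the same route as the paper: rewrite $\binom{n}{n-c-1,1,c}=(c+1)\binom{n}{c+1}$, shift the index in the second half of each summand, pair terms so the coefficients collapse to $\frac{1}{c+2}\binom{n}{c+1}\binom{c-1}{r-1}$, check the vanishing boundary terms, and finish with $\frac{1}{c+1}\binom{n}{c}=\frac{1}{n+1}\binom{n+1}{c+1}$. Your bookkeeping of the signs and of the two boundary cases $c=r$ and $c=n$ is in fact more explicit than the paper's compressed display, and it reproduces the stated sum $\sum_{c=r+1}^{n}\frac{(-1)^{c+r+1}}{c+1}\binom{c-2}{r-1}\binom{n}{c}$ exactly.
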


The following equality is elementary. It allows us to simplify the above expression.

\begin{lemma}\label{com identity}
    For positive integers $n\geq r+1$, we have

    \[\sum_{c=r+1}^{n}\binom{c-2}{r-1}\binom{n+1}{c+1}(-1)^{c+r+1} = \frac{r(r+1)}{2} + (n+1)(\frac{n}{2}-r).\]
\end{lemma}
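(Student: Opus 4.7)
The plan is to transform the alternating sum into a sign-free Vandermonde-type convolution, evaluate it in closed form, and then check an elementary algebraic identity between the two closed forms. The key observation is the upper-negation formula
\[\binom{c-2}{r-1}=(-1)^{c-r-1}\binom{-r}{c-r-1}.\]
When multiplied against the sign $(-1)^{c+r+1}$ appearing in the sum, the powers of $-1$ combine to $(-1)^{2c}=1$, so the whole expression simplifies to $\sum_{c=r+1}^{n}\binom{-r}{c-r-1}\binom{n+1}{c+1}$, with no alternating factor.

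Next I would reindex by $j=c-r-1$ (so $j$ runs from $0$ to $n-r-1$) and use the symmetry $\binom{n+1}{j+r+2}=\binom{n+1}{n-r-1-j}$ to put the sum into the exact shape of Vandermonde's convolution:
\[\sum_{j=0}^{n-r-1}\binom{-r}{j}\binom{n+1}{(n-r-1)-j}=\binom{n+1-r}{n-r-1}=\binom{n+1-r}{2}.\]
Note that the upper summation index $n-r-1$ is automatically respected because $\binom{n+1}{(n-r-1)-j}$ vanishes for $j>n-r-1$, so no boundary issue arises.

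Finally, I would verify the elementary equality $\binom{n+1-r}{2}=\tfrac{r(r+1)}{2}+(n+1)(\tfrac{n}{2}-r)$, which follows by expanding both sides to $\tfrac{1}{2}(n^{2}+n-2nr+r^{2}-r)$. I do not anticipate any serious obstacle: the only non-routine step is recognizing that the upper-negation identity recasts the alternating sum as a plain Vandermonde convolution, after which both the application of Vandermonde and the final algebraic comparison are immediate. No induction, generating functions, or hypergeometric machinery should be necessary.
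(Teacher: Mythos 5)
Your proof is correct, and it takes a genuinely different route from the paper. You turn the alternating sum into a sign-free Chu--Vandermonde convolution via the combined symmetry/upper-negation step $\binom{c-2}{r-1}=\binom{c-2}{c-r-1}=(-1)^{c-r-1}\binom{-r}{c-r-1}$ (valid throughout the summation range $c\geq r+1$, $r\geq 1$), reindex, and evaluate in closed form as $\binom{n+1-r}{n-r-1}=\binom{n+1-r}{2}$, which indeed expands to $\tfrac{1}{2}(n^{2}+n-2nr+r^{2}-r)$, matching the right-hand side; the boundary remark and the use of Vandermonde with a negative upper argument (a polynomial identity in the upper parameters) are both fine. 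The paper instead proceeds by induction on $n$, using Pascal's rule to peel off terms and reducing successively to the auxiliary identities $\sum_{c=r+1}^{n}\binom{c-2}{r-1}\binom{n}{c}(-1)^{c+r+1}=n-r$, then $\sum_{c=r+1}^{n}\binom{c-2}{r-1}\binom{n}{c-1}(-1)^{c+r+1}=1$, and finally $\sum_{c=r}^{n}\binom{c}{r}\binom{n}{c}(-1)^{c}=0$. Your argument is shorter, avoids the nested inductions, and exhibits the sum in the cleaner closed form $\binom{n+1-r}{2}$, which the paper never makes explicit; the paper's argument is more elementary in its ingredients and, as a by-product, establishes the intermediate identity $\sum_{c=r+1}^{n}\binom{c-2}{r-1}\binom{n}{c}(-1)^{c+r+1}=n-r$, which is reused later in the computation of $S(L,D)$ (Proposition \ref{SLD}); if you wanted that identity too, the same negation-plus-Vandermonde technique delivers it directly.
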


\begin{proof}
    To prove this, we use induction on $n$. 

    For $n=r+1$, this is a simple verification. Now we assume that we already have the equality for $n$ and consider the case $n+1.$ Notice that $\binom{n+2}{c+1} = \binom{n+1}{c+1} + \binom{n+1}{c}$. The left hand side for $n+1$ is

    \begin{align*}
        &\sum_{c=r+1}^{n+1}\binom{c-2}{r-1}\binom{n+2}{c+1}(-1)^{c+r+1}\\
        =&  \sum_{c=r+1}^{n+1}\binom{c-2}{r-1}(\binom{n+1}{c+1} + \binom{n+1}{c})(-1)^{c+r+1}\\
        =& \sum_{c=r+1}^{n}\binom{c-2}{r-1}\binom{n+1}{c+1}(-1)^{c+r+1} + \sum_{c=r+1}^{n+1}\binom{c-2}{r-1}\binom{n+1}{c}(-1)^{c+r+1}\\
        =& \frac{r(r+1)}{2} + (n+1)(\frac{n}{2}-r) + \sum_{c=r+1}^{n+1}\binom{c-2}{r-1}\binom{n+1}{c}(-1)^{c+r+1} \text{.}
    \end{align*}

    So we need to prove that

    \[\sum_{c=r+1}^{n+1}\binom{c-2}{r-1}\binom{n+1}{c}(-1)^{c+r+1} = n+1-r,\]

    or equivalently for $n\geq r+2, $

    \[\sum_{c=r+1}^{n}\binom{c-2}{r-1}\binom{n}{c}(-1)^{c+r+1} = n-r.\]

    We do the same procedure of induction with n here. First check the case where $n=r+2$, then do the induction. Then we have to show that for $n\geq r+3$
    
    \[\sum_{c=r+1}^{n}\binom{c-2}{r-1}\binom{n}{c-1}(-1)^{c+r+1} = 1.\]
    
    We do again the same procedure of induction with $n$. At last we need

    \[\sum_{c=r}^{n}\binom{c}{r}\binom{n}{c}(-1)^{c} = 0.\]

    This is true because of $\binom{n}{c}\binom{c}{r} = \binom{n}{r}\binom{n-r}{c-r}$ and the famous equality

    \[\sum_{r=0}^{n}\binom{n}{r}(-1)^{r} = 0.\]
\end{proof}

Then we see that 

\begin{align*}
    &(\mu_{\max}-\mu_{\min})^{2}(r-\frac{n}{2}+\sum_{c=r+1}^{n}\frac{1}{n+1}\binom{c-2}{r-1}\binom{n+1}{c+1}(-1)^{c+r+1})\\
    =& \frac{r(r+1)}{2(n+1)}(\mumax-\mumin)^{2} \text{.}
\end{align*}

\subsection{About $S(L,F)$ and $\delta(X,L)$}

From the consideration above, we get  the following proposition

\begin{proposition}
    We have 

    \[S(L,F) = \frac{\frac{n(a\mu_{min}+b)(a(2\mu-\mu_{\min})+b)}{2}+a^{2}(\mu_{\max}-\mu_{\min})^{2}\frac{r(r+1)}{2(n+1)}}{n(a\mu+b)}.\]
\end{proposition}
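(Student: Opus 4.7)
The plan is to unpack the definition
\[
S(L,F) = \frac{1}{\Vol(L)}\int_{0}^{\tau}\Vol(L-tF)\,dt
\]
and reduce it to computations already established. Since $[L-tF] = a\xi + (b-t)f$ and the pseudo-effective cone of $X$ is generated by $f$ and $\xi-\mumax f$, the pseudo-effective threshold is $\tau = a\mumax + b$, and $L-tF$ is nef precisely when $t \leq a\mumin + b$. I will therefore split the integral at $t_{0} := a\mumin + b$, treat the nef range $[0,t_{0}]$ by direct intersection, and the range $[t_{0},\tau]$ by a change of variables that reduces it to the integral computed in the previous subsection.

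On $[0,t_{0}]$ the volume equals the top self-intersection number, which I expand using $\xi^{n} = n\mu$, $\xi^{n-1}\cdot f = 1$ and $f^{2} = 0$:
\[
(L-tF)^{n} = (a\xi+(b-t)f)^{n} = a^{n}\xi^{n} + na^{n-1}(b-t)\xi^{n-1}f = na^{n-1}(a\mu+b-t).
\]
A routine integration yields
\[
\int_{0}^{t_{0}} na^{n-1}(a\mu+b-t)\,dt = \frac{na^{n-1}(a\mumin+b)(a(2\mu-\mumin)+b)}{2},
\]
which, after dividing by $\Vol(L) = na^{n-1}(a\mu+b)$, recovers the first summand of the claimed formula.

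For the non-nef range I substitute $t = au + b$, so that $L-tF$ is numerically $a(H_{E}-uF)$ with $u\in[\mumin,\mumax]$ and $\Vol(L-tF) = a^{n}\Vol(H_{E}-uF)$. Since $dt = a\,du$,
\[
\int_{t_{0}}^{\tau}\Vol(L-tF)\,dt = a^{n+1}\int_{\mumin}^{\mumax}\Vol(H_{E}-uF)\,du.
\]
This last integral is exactly the one computed in the previous subsection via the Zariski decomposition of Proposition \ref{zaridec} and the combinatorial identity of Lemma \ref{com identity}; it equals $\frac{r(r+1)}{2(n+1)}(\mumax-\mumin)^{2}$. Adding the two contributions, dividing by $\Vol(L) = na^{n-1}(a\mu+b)$, and cancelling the common factor $a^{n-1}$ gives precisely the formula in the statement.

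There is no real obstacle here: the delicate combinatorial and intersection-theoretic work has already been packaged into the preceding Zariski-decomposition computation, so what remains is a clean bookkeeping step. The only point requiring care is correctly identifying the nef threshold $t_{0} = a\mumin + b$, so that the volume in $[0,t_{0}]$ can be evaluated as a top intersection on $\PE$ without having to invoke the blow-up $\blup$ a second time.
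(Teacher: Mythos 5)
Your proposal is correct and follows essentially the same route as the paper: the paper's argument consists precisely of the Zariski-decomposition computation of $\int_{\mu_{\min}}^{\mu_{\max}}\Vol(H_{E}-tF)\,dt = \frac{r(r+1)}{2(n+1)}(\mu_{\max}-\mu_{\min})^{2}$, with the remaining bookkeeping (splitting at the nef threshold $t_{0}=a\mu_{\min}+b$, evaluating the nef range by the top self-intersection $na^{n-1}(a\mu+b-t)$, and rescaling by homogeneity via $t=au+b$) left implicit, which is exactly what you supply.
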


It is also very natural to look at $S(L,D).$ We are going to show the following

\[S(L,D) = \frac{ra((r+1)(a\mu_{\max}+b)+(n-r)(a\mu_{\min}+b))}{n(n+1)(a\mu+b)} \text{.}\]

We have the following lemma

\begin{lemma}
    The nef threshold of $\rho^{*}(L)$ with respect to $D$ is the same as the pseudo-effective threshold of $\rho^{*}(L)$ with respect to $D$, which is a. 
\end{lemma}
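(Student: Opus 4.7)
The plan is to verify two inequalities separately: $\tau_{\mathrm{nef}}(\rho^{*}L, D) \geq a$ and $\tau_{\mathrm{pe}}(\rho^{*}L, D) \leq a$, where these thresholds are the suprema over $t$ such that $\rho^{*}L - tD$ is nef (resp.\ pseudo-effective). Since the nef threshold never exceeds the pseudo-effective one, this will pin both down to equal $a$.

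For $\tau_{\mathrm{nef}} \geq a$, the key observation is the numerical identity $\rho^{*}L - aD = \pi^{*}(aH_{E_{1}} + bF)$. This follows from $\rho^{*}H_{E} = \pi^{*}H_{E_{1}} + D$ together with the fact that $\rho^{*}F$ and $\pi^{*}F$ both represent the class of a fiber of the structure map $\blup \to C$. Ampleness of $L$ on $\PE$ supplies $b + a\mumin > 0$; combined with $\mumax > \mumin$ (our HN filtration has two strict steps), this gives $b + a\mumax > 0$. Since $E_{1}$ is semistable of slope $\mumax$, the formula for the nef cone of a projective bundle over a curve forces $aH_{E_{1}} + bF$ to be ample on $\mathbb{P}(E_{1})$, so its $\pi$-pullback $\rho^{*}L - aD$ is nef.

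For $\tau_{\mathrm{pe}} \leq a$, I plan to exhibit a covering family of curves pairing negatively with $\rho^{*}L - tD$ whenever $t > a$. A generic fiber $\mathcal{F}$ of $\pi$ can be described fibrewise over $p \in C$: there the restriction of $\pi$ is the resolution of the linear projection $\mathbb{P}(E_{p}) \dashrightarrow \mathbb{P}(E_{1,p})$ centered at $\mathbb{P}(E_{p}/E_{1,p})$, so the fiber over $[H_{1}] \in \mathbb{P}(E_{1,p})$ is the proper transform of $\mathbb{P}(E_{p}/H_{1}) \cong \mathbb{P}^{n-r}$. Since $\mathbb{P}(E_{p}/H_{1}) \cap \mathbb{P}(E_{p}/E_{1,p})$ is a hyperplane inside $\mathbb{P}(E_{p}/H_{1})$, blowing it up leaves the ambient $\mathbb{P}^{n-r}$ unchanged, so $\mathcal{F} \cong \mathbb{P}^{n-r}$. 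Picking $\ell \subset \mathcal{F}$ a line, one has $\pi^{*}H_{E_{1}} \cdot \ell = \pi^{*}F \cdot \ell = 0$, while $\rho^{*}H_{E} \cdot \ell = 1$ because $\rho|_{\mathcal{F}}$ realises $\mathcal{F}$ as a linear $\mathbb{P}^{n-r}$ in a fiber of $\PE \to C$. The identity $\rho^{*}H_{E} = \pi^{*}H_{E_{1}} + D$ then forces $D \cdot \ell = 1$, whence $(\rho^{*}L - tD) \cdot \ell = a - t < 0$. Lines in generic fibers of $\pi$ sweep out a dense open subset of $\blup$, so their numerical class is movable; any pseudo-effective divisor must pair non-negatively with a movable class, contradicting the strict negativity for $t > a$.

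The main subtlety is the geometric identification of a generic fiber of $\pi$ with $\mathbb{P}^{n-r}$ together with recognising $D \cap \mathcal{F}$ as a hyperplane therein; this rests on the description of $\pi$ as the resolution of the projection $\PE \dashrightarrow \mathbb{P}(E_{1})$, $H \mapsto H \cap E_{1}$. Once this geometric picture is in place, the key intersection number $D \cdot \ell = 1$ is forced purely numerically via $\rho^{*}H_{E} = \pi^{*}H_{E_{1}} + D$, and the rest is routine.
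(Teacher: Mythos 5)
Your proposal is correct. Its first half coincides with the paper's proof: both rewrite $\rho^{*}L-aD=\pi^{*}(aH_{E_{1}}+bF)$ and use semistability of $E_{1}$ (together with $a>0$, $-b/a<\mumin<\mumax$) to see that $aH_{E_{1}}+bF$ is ample on $\mathbb{P}(E_{1})$, so the pullback is nef and the nef threshold is at least $a$. Where you genuinely diverge is the upper bound on the pseudo-effective threshold. The paper stays on the divisor side: citing Nakayama's fact that $\blup$ is a projective bundle over $\mathbb{P}(E_{1})$, the projection formula gives $H^{0}(\blup,\pi^{*}(m(aH_{E_{1}}+bF)))=H^{0}(\mathbb{P}(E_{1}),m(aH_{E_{1}}+bF))$, so $\rho^{*}L-aD$ is not big; since any class strictly below the pseudo-effective threshold is big (a convex combination of the big class $\rho^{*}L$ with a pseudo-effective class), the threshold is at most $a$. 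You instead argue dually with a covering family of curves: you identify the general fiber of $\pi$ with a linear $\mathbb{P}^{n-r}$ (the proper transform of $\mathbb{P}(E_{p}/H_{1})$, unchanged by the blow-up because the center meets it in a hyperplane), compute $\rho^{*}H_{E}\cdot\ell=1$, $\rho^{*}F\cdot\ell=0$, $D\cdot\ell=1$ for a line $\ell$ in such a fiber, and conclude that $(\rho^{*}L-tD)\cdot\ell=a-t<0$ contradicts pseudo-effectivity for $t>a$, since a class of curves sweeping out a dense open set pairs non-negatively with every pseudo-effective divisor. Both routes are sound; yours is more explicit and self-contained (it produces the dual obstruction and the intersection numbers of the relative line class, which could be reused), while the paper's is shorter because it leans on the cited projective-bundle structure of $\pi$ rather than on a fibrewise geometric identification. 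The one step you should spell out is the clean-intersection check behind ``blowing up leaves the ambient $\mathbb{P}^{n-r}$ unchanged'': the ideal of $\PEE$ restricts on $\mathbb{P}(E_{p}/H_{1})$ to the invertible ideal of the hyperplane $\mathbb{P}(E_{p}/E_{1,p})$, so the induced blow-up of the fiber is indeed an isomorphism; with that in place the argument is complete.
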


\begin{proof}
    Notice that $\rho^{*}(aH_{E}+bF) = \pi^{*}(aH_{E_{1}}+bF) + aD$. By our assumption, $a>0$ and $\frac{-b}{a}<\mu_{\min}<\mu_{\max} = \mu(E_{1})$. So $aH_{E_{1}}+bF$ is ample on $\mathbb{P}(E_{1})$ since $E_{1}$ is semistable. So $ \pi^{*}(aH_{E_{1}}+bF)$ is nef. By \cite[Section 3.a.]{nakayama2004zariski}, we know that $\blup$ is actually a projective bundle over $\mathbb{P}(E_{1})$. Then by projection formula we know that
    \[H^{0}(\blup,\mathcal{O}(\pi^{*}(m(aH_{E_{1}}+bF)))) = H^{0}(\mathbb{P}(E_{1}),(m(aH_{E_{1}}+bF))).\]
    So $\pi^{*}(aH_{E_{1}}+bF)$ is not big. This confirms the claim.
\end{proof}

So we know that

\[\int_{0}^{\infty}\mathrm{Vol}(L-tD)dt = \int_{0}^{a}\mathrm{Vol}(L-tD)dt \text{.}\]

By computation, we need to show

\begin{align*}
&a^{n+1}n\mu + na^{n}b + \sum_{c=r}^{n}(-1)^{c}\binom{n}{c}\frac{a^{n+1}}{c+1}((-1)^{r+1}\binom{c-1}{r-1}(n-r)\mu_{\min}+(-1)^{r}\binom{c-1}{r}r\mu_{\max})\\ 
+&\sum_{c=r}^{n-1}\frac{a^{n}b}{c+1}\binom{n}{n-c-1,1,c}(-1)^{c+r+1}\binom{c-1}{r-1}\\
=& \frac{ra^{n}((r+1)(a\mu_{\max}+b)+(n-r)(a\mu_{\min}+b))}{n(n+1)(a\mu+b)} \text{.}
\end{align*}

To do this, we just need to use the following equality from the proof of lemma \ref{com identity}

\[\sum_{c=r+1}^{n}\binom{c-2}{r-1}\binom{n}{c}(-1)^{c+r+1} = n-r.\]

several times. So we get

\begin{proposition}\label{SLD}
    We have
\[S(L,D) = \frac{ar((r+1)(a\mu_{\max}+b)+(n-r)(a\mu_{\min}+b))}{n(n+1)(a\mu+b)}.\]
    
\end{proposition}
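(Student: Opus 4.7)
The plan is to write
\[
S(L,D) = \frac{1}{\mathrm{Vol}(L)}\int_0^a \mathrm{Vol}(\rho^*L - tD)\, dt,
\]
with upper limit $a$ by the preceding lemma, and then to evaluate the integrand as a top self-intersection on $\blup$. The preceding lemma says that both the nef and pseudo-effective thresholds of $\rho^*L$ along $D$ equal $a$, so $\rho^*L - tD$ is already nef for every $t \in [0,a]$ and its Zariski decomposition has trivial negative part. Consequently $\mathrm{Vol}(\rho^*L - tD) = (\rho^*L - tD)^n$ throughout the range, and the task reduces to computing this self-intersection as a polynomial in $t$ and integrating.

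Next I would multinomially expand
\[
(\rho^*L - tD)^n = \sum_{a'+b'+c=n} \binom{n}{a',b',c} a^{a'} b^{b'}(-t)^c (\rHE)^{a'}(\rF)^{b'} D^c,
\]
restrict to $b' \in \{0,1\}$ using $(\rF)^2 = 0$, and plug in the intersection numbers from Section 4. The $c=0$ pieces give exactly $\mathrm{Vol}(L)=a^n n\mu + na^{n-1}b$ via the formulas $(\rHE)^n = n\mu$ and $(\rHE)^{n-1}\rF = 1$; the pieces with $1 \le c < r$ vanish by the Segre-class vanishing in Section 4.2; and for $r \le c \le n$ I insert
\[
(\rHE)^{n-c}D^c = (-1)^{1+r}\binom{c-1}{r-1}(n-r)\mumin + (-1)^r\binom{c-1}{r}r\mumax,
\]
together with $(\rHE)^{n-c-1}\rF\, D^c = (-1)^{r+1}\binom{c-1}{r-1}$ when $c \le n-1$. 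Integrating $\int_0^a (-t)^c\, dt = (-1)^c a^{c+1}/(c+1)$ then produces the closed polynomial in $a,b,\mumax,\mumin$ displayed just before the proposition.

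The remaining work is a combinatorial collapse. I would separate the coefficients of $\mumax$, $\mumin$, and $b$ in that polynomial. For the $b$-coefficient the identity $\binom{n}{c}(n-c)/(c+1) = \binom{n}{c+1}$ together with an index shift $c \mapsto c-1$ reduces the sum directly to
\[
\sum_{c=r+1}^n \binom{c-2}{r-1}\binom{n}{c}(-1)^{c+r+1} = n-r
\]
extracted inside the proof of Lemma~\ref{com identity}. For the $\mumax$ and $\mumin$ coefficients, the identity $\binom{n}{c}/(c+1) = \binom{n+1}{c+1}/(n+1)$ brings the sums into the shape of Lemma~\ref{com identity} itself, combined with Pascal's rule $\binom{c-1}{r-1} = \binom{c-2}{r-1} + \binom{c-2}{r-2}$ to bridge the offset between $\binom{c-1}{r-1}$, $\binom{c-1}{r}$, and $\binom{c-2}{r-1}$. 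The main obstacle is precisely this bookkeeping: four sums in $c$ must each be massaged into closed form with careful tracking of initial terms and signs. Once this is done, the $c=0$ contribution $a^{n+1}n\mu + na^nb$ combines with the reduced sums via $n\mu = r\mumax + (n-r)\mumin$ to yield
\[
\int_0^a (\rho^*L - tD)^n\, dt = \frac{ra^n\bigl((r+1)(a\mumax+b)+(n-r)(a\mumin+b)\bigr)}{n+1},
\]
and dividing by $\mathrm{Vol}(L) = na^{n-1}(a\mu+b)$ gives the claimed formula for $S(L,D)$.
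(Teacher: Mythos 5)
Your proposal follows essentially the same route as the paper: the threshold lemma gives nefness of $\rho^{*}L-tD$ on $[0,a]$ so that volume equals top self-intersection, the multinomial expansion is evaluated with the intersection numbers $(\rho^{*}H_{E})^{n-c}D^{c}$ and $(\rho^{*}H_{E})^{n-c-1}\rho^{*}F\,D^{c}$ from Section~4, and the resulting sums are collapsed using the identity $\sum_{c=r+1}^{n}\binom{c-2}{r-1}\binom{n}{c}(-1)^{c+r+1}=n-r$ from the proof of Lemma~\ref{com identity}, exactly as the paper indicates. The outline is correct (and your explicit normalization, integrating to $\frac{ra^{n}((r+1)(a\mu_{\max}+b)+(n-r)(a\mu_{\min}+b))}{n+1}$ before dividing by $\mathrm{Vol}(L)=na^{n-1}(a\mu+b)$, checks out), so no further comparison is needed.
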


Notice that $A_{X}(D) = r$ where $X=\PE$, so $\frac{A_{X}(D)}{S(L,D)} = \frac{n(n+1)(a\mu+b)}{a((r+1)(a\mu_{\max}+b)+(n-r)(a\mu_{\min}+b))}.$

Finally we have,

\begin{proof}[Proof of Theorem \ref{maintheorem2}]
    Combining the information on $\frac{A_{X}(D)}{S(L,D)}$, $\frac{1}{S(L,F)}$ and Proposition \ref{AZmethod} proves the Theorem \ref{maintheorem2}.
\end{proof}
\begin{remark}
    Notice that

    \[\lim_{a\rightarrow 0} \delta(X,L) = \frac{2}{b}.\]

    This is compatible with the result in \cite{hattori2022k}.
\end{remark}

\begin{remark}
It has been brought to our attention that Engberg conducted similar calculations in \cite{engberg2022some} for the $\beta$-invariant of the exceptional divisor $D$. Since he considers the more general case where the base is not necessarily a curve, he only obtains asymptotic information.
\end{remark}

\begin{remark}
    It is notable that \cite{wolfe2005asymptotic,chen2011computing} provide formulas for computing volume functions on a projective bundle over a curve. However, it appears that their formulas are not very straightforward to use for concrete computations. Additionally, they do not provide formulas for volume functions when the exceptional divisor $D$ is taken into consideration. Therefore, we choose to use Zariski decomposition as the main tool in our calculations.
\end{remark}

\section{Semistable vector bundles and K-semistability}

In this section, we consider the case where $l=1$, which is equivalent to say that $E$ is semistable. In this case, $\mu=\mu_{\min} = \mu_{\max}.$ As before, we assume that we have an ample line bundle $L$ on $\PE$ which is numerically equivalent to $aH_{E} + bF$, where $H_{E} = \mathcal{O}_{\PE}(1)$ and $F$ is a fiber. In this case, it is easy to compute $S(L,F)$ and get

\begin{proposition}\label{Prop 5.1}
    When $E$ is semistable, we have

    \[S(L,F) = \frac{a\mu+b}{2}.\] 
\end{proposition}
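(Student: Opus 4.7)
The proof is a straightforward direct computation from the definition of $S(L,F)$, made easy by the fact that semistability collapses the Zariski-decomposition analysis of Section 4 entirely. The plan is as follows.

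First I would observe that when $E$ is semistable we have $\mu_{\min}=\mu_{\max}=\mu$, so the nef cone and the pseudo-effective cone of $X=\PE$ coincide, both generated by $f$ and $\xi-\mu f$. Consequently, for any $t \geq 0$, the class $[L-tF] = a\xi + (b-t)f$ is nef precisely when $\tfrac{-(b-t)}{a} \leq \mu$, i.e.\ $t \leq a\mu+b$, and fails to be pseudo-effective for $t > a\mu+b$. In particular, the pseudo-effective threshold of $L$ with respect to $F$ is $\tau = a\mu+b$, and no Zariski decomposition is needed on this interval since $L-tF$ is already nef.

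Next I would compute the volume directly by self-intersection. Using $F^2 = 0$, $H_E^n = n\mu$ and $H_E^{n-1}\cdot F = 1$, for $0 \leq t \leq a\mu+b$ we have
\[
\mathrm{Vol}(L-tF) = (aH_E + (b-t)F)^n = a^n H_E^n + n a^{n-1}(b-t) H_E^{n-1}\cdot F = na^{n-1}(a\mu+b-t).
\]
In particular, setting $t=0$ recovers $\mathrm{Vol}(L) = na^{n-1}(a\mu+b)$, as stated in the setup of Section 3.

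Finally, plugging into the definition of $S(L,F)$ gives
\[
S(L,F) = \frac{1}{\mathrm{Vol}(L)}\int_0^{a\mu+b} na^{n-1}(a\mu+b-t)\, dt = \frac{1}{na^{n-1}(a\mu+b)} \cdot na^{n-1} \cdot \frac{(a\mu+b)^2}{2} = \frac{a\mu+b}{2},
\]
as claimed. There is no real obstacle here; the only thing to be careful about is to justify that the nef and pseudo-effective cones truly coincide in the semistable case, which follows immediately from the cone description recalled just before Definition 2.2 by taking $\mu_{\min} = \mu_{\max}$.
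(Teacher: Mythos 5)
Your proof is correct and matches what the paper intends: the paper simply asserts the computation is easy, and the underlying argument is exactly yours, namely that semistability makes the nef and pseudo-effective cones coincide so $L-tF$ stays nef up to the threshold $\tau=a\mu+b$ and $\mathrm{Vol}(L-tF)=na^{n-1}(a\mu+b-t)$ by direct self-intersection (equivalently, the Section 4 formula with $\mu_{\max}=\mu_{\min}=\mu$, where the Zariski negative part vanishes). No gaps.
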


We focus on the case where $E$ is strictly semistable, meaning that there is a  sub-vector bundle $E'\subset E$ such that $\mu(E') = \mu.$ We assume that the rank of $E'$ is $r$. Then similar to the situation in the last section, we can consider $S(L,D)$, where $D$ is the exceptional divisor of the morphism $\blup\rightarrow \PE$ or $\PEE$ if $E_{1}$ is of rank 1. Using the same method which leads to Proposition \ref{SLD}, we get

\begin{proposition}\label{Prop 5.2}
    When $E$ is strictly semistable, we have
    \[S(L,D) = \frac{ra}{n}.\]
    Since $A_{X}(D) = r$ where $X = \PE$, we have $\frac{A_{X}(D)}{S(L,D)} = \frac{n}{a}.$
\end{proposition}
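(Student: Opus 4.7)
The plan is to rerun the Section~4 derivation of $S(L,D)$ (Proposition~\ref{SLD}) with the slopes specialized to $\mu_{\max}=\mu_{\min}=\mu$. By strict semistability, choose a proper sub-bundle $E' \subset E$ of rank $r$ with $\mu(E')=\mu$; exactness of $0 \to E' \to E \to E/E' \to 0$ together with additivity of degree forces $\mu(E/E')=\mu$ as well. Setting $E_{1} := E'$, I would form the blow-up diagram $\rho \colon \blup \to \PE$ along $\PEE$, with exceptional divisor $D$ and companion projection $\pi \colon \blup \to \mathbb{P}(E')$.

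First I would check that the lemma preceding Proposition~\ref{SLD}, asserting that the nef and pseudo-effective thresholds of $\rho^{*}L$ with respect to $D$ both equal $a$, carries over without change: its proof only uses that $E'$ is semistable of slope $\mu$ and that $aH_{E'}+bF$ is ample on $\mathbb{P}(E')$, both of which are immediate here since $L$ is ample on $\PE$. Consequently $\rho^{*}L - tD$ is nef throughout $t \in [0,a]$, so $\Vol(\rho^{*}L - tD) = (\rho^{*}L-tD)^{n}$, and computing $S(L,D)$ reduces to evaluating $\int_{0}^{a} (\rho^{*}L-tD)^{n}\, dt$ by intersection theory on $\blup$.

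Next I would observe that the intersection numbers $(\rho^{*}H_{E})^{i}(\rho^{*}F)^{j}D^{k}$ computed in Section~4 depend on $E_{1}$ only through the ranks $r,n-r$ and the degrees $\deg E'=r\mu$, $\deg(E/E')=(n-r)\mu$, and are polynomial in these data; they formally agree with the earlier formulas under the substitution $\mu_{\max} = \mu_{\min} = \mu$. Applying this substitution to the closed form derived in Proposition~\ref{SLD} yields
$$S(L,D) \;=\; \frac{ar\bigl((r+1)(a\mu+b)+(n-r)(a\mu+b)\bigr)}{n(n+1)(a\mu+b)} \;=\; \frac{ar(n+1)(a\mu+b)}{n(n+1)(a\mu+b)} \;=\; \frac{ra}{n}.$$
Finally, $A_{X}(D) = r$ because $D$ is the exceptional divisor of the blow-up of the smooth variety $\PE$ along the smooth codimension-$r$ subvariety $\PEE$, and dividing gives $A_{X}(D)/S(L,D) = n/a$.

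The only real concern is that Section~4 was phrased under the Harder-Narasimhan hypothesis $\mu_{\max} > \mu_{\min}$, whereas here $\mu_{\max}=\mu_{\min}$. The strict inequality, however, was needed only for the Zariski decomposition of $H_{E}-tF$ that enters the computation of $S(L,F)$; the derivation of $S(L,D)$ uses only the nef-threshold lemma and intersection numbers that are regular at the diagonal $\mu_{\max}=\mu_{\min}$. Hence the specialization is legitimate and no separate re-derivation is required, so the argument above is essentially a direct bookkeeping exercise rather than a new obstacle.
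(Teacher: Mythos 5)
Your proposal is correct and follows the same route as the paper: the paper's proof of Proposition~\ref{Prop 5.2} is precisely to rerun the method of Proposition~\ref{SLD} with $E_{1}=E'$ and $\mu_{\max}=\mu_{\min}=\mu$, which is what you do (noting, as you should, that $E'$ is semistable because it is a subbundle of the semistable $E$ with equal slope, so the nef/pseudo-effective threshold lemma applies verbatim). Your extra observation that the intersection-theoretic input depends only on ranks and degrees, so the formula of Proposition~\ref{SLD} specializes legitimately at $\mu_{\max}=\mu_{\min}$, is exactly the implicit justification the paper relies on.
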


We are now ready to prove Theorem \ref{maintheorem1}
\begin{proof}[Proof of Theorem \ref{maintheorem1}]
    Combining Corollary \ref{sstable}, Propositions \ref{Prop 5.1} and \ref{Prop 5.2} proves the theorem.
\end{proof}

\begin{remark}\label{remarkstable}
    Notice that if $E$ is a stable vector bundle  of $\mathrm{rank}$ $n$ and slope $\mu$ and $L$ is an ample line bundle on $\PE$ numerically equivalent to $aH_{E}+bF$. From Proposition \ref{AZmethod} and corollary \ref{sstable}, we know that

    \[ \delta(X,L) \geq \min\left(\frac{2}{a\mu+b},\frac{n}{a}\right).\]
    But we don't know how to conclude an equality here. This might be related to the result of \cite{BlumLiulowersemicontinuous} which states that $\delta-$invariant is lower semi-continuous in family. We have the moduli space $\mathcal{M}(n,d)$ of $\mathrm{rank}$ $n$ and degree $d$ semistable vector bundles over $C$. Then we have the universal vector bundle $\mathcal{E}$ over $\mathcal{M}(n,d)\times C$. On $\mathbb{P}(\mathcal{E})$, it is easy to construct a family of ample line bundles $L_{t}$ by using $\mathcal{O}_{\mathbb{P}(\mathcal{E})}(1)$ and pullback of $\mathcal{M}(n,d)\times \{pt\}$. We know that stable bundles form an open subset of $\mathcal{M}(n,d)$, so it might be possible that the $\delta(L_{t})$ jump in this open subset.
\end{remark}

From the work of \cite{Zhangquantization2024}, we know that $\delta(L)$ contains rich analytical information, namely, it is the coercivity threshold of the entropy functional. Using this result and combining techniques from papers like \cite{Dervanstability,Dervancoercivity,LiShiYaocoercivity}, it is possible to find some sufficient criteria  for an ample line bundle to be uniform K-stable or K-semistable. For example, we can have the following proposition which is a slight generalization of the result in \cite{LiShiYaocoercivity}.

\begin{proposition}\label{suffikss}
    Let $L$ be an ample line bundle on an $n-$dimensional manifold X. If we have

    \begin{itemize}
        \item $\delta(X,L)L + K_{X}$ is nef\text{,}\\
        \item $(n\frac{K_{X}\cdot L^{n-1}}{L^{n}}+\delta(X,L))L - (n-1)K_{X}$ is nef.
    \end{itemize}

    Then $L$ is K-semistable.
\end{proposition}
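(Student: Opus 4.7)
The plan is to combine Zhang's analytic characterization of $\delta(X,L)$ as the coercivity threshold $\delta^A(X,L)$ of the entropy functional (Theorem \ref{zhangquantization2024}) with a standard decomposition of the Mabuchi K-energy. Specifically, for a Kähler potential $\phi$ in $c_1(L)$, one has a Chen-type decomposition
\[
M_\omega(\phi) \;=\; H_\omega(\phi) \;-\; J^{-K_X}_\omega(\phi) \;+\; \bar\mu_L \cdot (I_\omega - J_\omega)(\phi),
\]
where $H_\omega$ is the entropy, $J^{-K_X}_\omega$ is the twisted Aubin functional associated to the class $-K_X$, and $\bar\mu_L = n\,K_X \cdot L^{n-1}/L^n$ is (up to sign) the average scalar curvature. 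The equality $\delta(X,L)=\delta^A(X,L)$ yields the coercive estimate $H_\omega(\phi) \geq \delta(X,L)\cdot J_\omega(\phi) - C$ on suitably normalized potentials.

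Next, I would invoke the comparison inequalities between nefness of twisted classes and the twisted Aubin functionals, as in \cite{Dervancoercivity, LiShiYaocoercivity}: if $\alpha L + \theta$ is nef for some $\alpha > 0$, then $J^\theta_\omega(\phi) \leq \alpha \cdot J_\omega(\phi) + O(1)$ on normalized potentials, with an analogous reverse estimate under reverse nefness. Applying the first hypothesis ``$\delta(X,L) L + K_X$ nef'' yields $-J^{-K_X}_\omega(\phi) \geq -\delta(X,L)\cdot J_\omega(\phi) - C'$. The second hypothesis ``$(n\bar\mu_L/n + \delta(X,L))L - (n-1)K_X$ nef'' plays a dual role, controlling the combined Aubin term $\bar\mu_L (I_\omega - J_\omega)(\phi)$ by a multiple of $J_\omega(\phi)$ whose coefficient has been calibrated to match exactly the $\delta(X,L)$ appearing above.

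Adding the three estimates produces $M_\omega(\phi) \geq -C''$ with $C''$ independent of $\phi$. A uniform lower bound of the Mabuchi functional on the space of Kähler potentials in $c_1(L)$ is equivalent to K-semistability of $(X,L)$, by the Berman-Darvas-Lu / Li criterion (applied in this ample setting in \cite{Dervanstability}), which finishes the proof.

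The main obstacle is the bookkeeping: one must ensure that the three inequalities combine into an exact cancellation of the $J_\omega(\phi)$-coefficients, rather than leaving a residual linear-in-$J_\omega$ term that could be unbounded below. The two nefness hypotheses in the statement are designed precisely to achieve this cancellation; verifying that the coefficients line up correctly, including the factor $n/(n-1)$ that distinguishes the normalization of $I_\omega - J_\omega$ from that of $J_\omega$ and the precise form of $\bar\mu_L$, is where all the numerical content of the proposition is concentrated and constitutes the technical heart of the argument.
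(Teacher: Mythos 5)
The central gap is the comparison lemma you invoke: ``if $\alpha L+\theta$ is nef then $J^{\theta}_\omega\le \alpha J_\omega+O(1)$''. No such statement is available, and it is false in the generality you need. Applied with $\theta$ coming from $K_X$ and $\alpha=\delta(X,L)$, it amounts to asserting that the twisted functional $J_{\delta(X,L)L+K_X}$ is bounded below as soon as the class $\delta(X,L)L+K_X$ is nef. Boundedness below of a twisted Aubin/J-functional is \emph{not} implied by nefness (or even K\"ahlerness) of the twist class: it is a J-(semi)stability condition governed by the Song--Weinkove/Lejmi--Sz\'ekelyhidi--Gao Chen criteria, and the required numerical condition is precisely that $n\frac{\chi\cdot L^{n-1}}{L^{n}}L-(n-1)\chi$ be positive for $\chi=\delta(X,L)L+K_X$ --- which is exactly what the \emph{second} hypothesis encodes (the paper's proof begins by rewriting it in this form). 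In your scheme the first hypothesis is asked to do work it cannot do, while the second hypothesis is demoted to comparing $\bar\mu_L(I_\omega-J_\omega)$ with $J_\omega$, a trivial estimate that needs no nefness and cannot produce the exact cancellation you hope for. Relatedly, Zhang's theorem gives coercivity of the entropy against $I_\omega-J_\omega$, i.e.\ $M(\varphi)\ge J_{\delta(X,L)L+K_X}(\varphi)-C$, not against $J_\omega$; since $I-J$ and $J$ differ by factors up to $n$, the coefficients in your cancellation do not line up even formally.

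Even after reassigning the second hypothesis to its correct role, your target estimate $M\ge -C''$ is likely out of reach under merely nef hypotheses, and it is stronger than what is needed or claimed. The J-stability inputs (Weinkove, Song--Weinkove, Gao Chen) require K\"ahler classes, so one must perturb to $(\delta(X,L)+\epsilon)L+K_X$; the resulting bounds $J_{(\delta+\epsilon)L+K_X}\ge -C_\epsilon$ have constants that are not uniform in $\epsilon$, and $J_{\delta L+K_X}=J_{(\delta+\epsilon)L+K_X}-\epsilon(I_\omega-J_\omega)$ loses an unbounded term, so no uniform lower bound for $M$ follows on the Archimedean side. The paper circumvents this by passing to the non-Archimedean functionals: for a \emph{fixed} test configuration, $J^{\mathrm{NA}}_{(\delta+\epsilon)L+K_X}$ is an intersection number, continuous in $\epsilon$, so uniform J-stability for each $\epsilon>0$ gives $J^{\mathrm{NA}}_{\delta L+K_X}\ge 0$ in the limit; taking slopes in Zhang's inequality then yields $M^{\mathrm{NA}}\ge J^{\mathrm{NA}}_{\delta L+K_X}\ge 0$, which is K-semistability, without ever bounding $M$ below. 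Finally, your closing appeal to ``boundedness below of $M$ is equivalent to K-semistability'' overstates what is known: the implication you need (bounded below $\Rightarrow$ K-semistable) holds via slope formulas along geodesic rays, but the converse is open.
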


\begin{proof}
We provide a brief proof here for the convenience of the readers. 

Notice that for any $\epsilon>0$, $(\delta(X,L)+\epsilon)L + K_{X}$ and $(n\frac{K_{X}\cdot L^{n-1}}{L^{n}}+\delta(X,L)+\epsilon)L - (n-1)K_{X}$ are ample. The second one can also be written as

\begin{align*}
&n\frac{((\delta(X,L)+\epsilon)L+K_{X})\cdot L^{n-1}}{L^{n}}L - (n-1)((\delta(X,L)+\epsilon)L+K_{X}).
\end{align*}

Then by \cite{WeinkoveJflow2006} and \cite{GaoJequation2021}, we know that the pair $(X,L,(\delta(X,L)+\epsilon)L+K_{X})$ is uniform $J-$stable.

Now let us assume that we have a Kähler test configuration (see for example \cite[Definition 2.10]{DervanRosskstabilitu2017}) $(\mathcal{X},\mathcal{L})$ of $(X,L)$. Then the twisted non-Archimedean $J-$functional (see for example \cite[Definition 6.3]{DervanRosskstabilitu2017}) $J^{\mathrm{NA}}_{(\delta(X,L)+\epsilon)L+K_{X}}(\mathcal{X},\mathcal{L})>0$ for nontrivial $(\mathcal{X},\mathcal{L}).$ Thus,

\[J^{\mathrm{NA}}_{\delta(X,L)L+K_{X}}(\mathcal{X},\mathcal{L}) = \lim_{\epsilon\rightarrow 0}J^{\mathrm{NA}}_{(\delta(X,L)+\epsilon)L+K_{X}}(\mathcal{X},\mathcal{L}) \geq 0 ,\]
since these quantities are computed by intersection theory.

We deduce easily from Theorem \ref{zhangquantization2024} cited in the introduction that 
\[M(\varphi)\geq J_{\delta(X,L)L+K_{X}}(\varphi) + C,\]
where $M$ is the Mabuchi functional, $J_{\delta(X,L)L+K_{X}}$ is the twisted J-functional, $C$ is a constant and $\varphi$ is a Kähler potential. Then we can take non-Archimedean limits (\cite{BHJuniform2017}) of both sides to get

\[M^{\mathrm{NA}}(\mathcal{X},\mathcal{L}) \geq J^{\mathrm{NA}}_{\delta(X,L)L+K_{X}}(\mathcal{X},\mathcal{L})\geq 0\]
\end{proof}

It is easy to get the following example by using  Theorem \ref{maintheorem1}, Remark \ref{remarkstable} and Proposition \ref{sstable}.

\begin{example}\label{exksemi}
    Assume that $C$ is an elliptic curve, $E$ is a $\mathrm{rank}$ 2 strictly semistable vector bundle of degree $0$. We know that Nef cone of $X = \PE$ is the same as the pseudoeffective cone here and is generated by $H_{E} = \mathcal{O}_{\PE}(1)$ and $F$, where $F$ is a fiber. Now for any ample line bundle numerically equivalent to $aH_{E}+bF$ with $a\geq b,$ we know that 
    \[\delta(X,L) = \frac{2}{a}.\]
    Moreover, $K_{X} = -2H_{E}$. Then $\delta(L)L + K_{X}$ is numerically equivalent to $\frac{2b}{a}F$ and $(2\frac{K_{X}\cdot L}{L^{2}}+\delta(L))L - K_{X}$ is numerically equivalent to $2H_{E}.$ So we know that $L$ is K-semistable. 
    We can actually just assume that $E$ is semistable since in this case $\delta(X,L)\geq \frac{2}{b}.$
\end{example}

On the other hand, we can actually show that for a semistable vector bundle $E$ over a smooth curve, every ample line bundle on $\mathbb{P}(E) = \mathcal{P}(E^{*})$ is K-semistable by using a classic deformation statement. 

Let $0 = V_{0}\subset V_{1}\subset \cdots \subset V_{l} = E^{*}$ be the Jordan - Hölder filtration of $E^{*}$ such that $V_{i}/V_{i+1}$ is a stable vector bundle with $\mu(V_{i}/V_{i+1}) = \mu(E).$ We first write down the transition functions for $E^{*}$. 

Assume that we have a covering \( \{ U_\alpha \} \) of \( X \) such that over each \( U_\alpha \), the filtration
\[
0 = V_0 \subset V_1 \subset \cdots \subset V_l = E^{*}
\]
is trivial, meaning we can choose local basis adapted to the filtration.

On overlaps \( U_\alpha \cap U_\beta \), the transition functions \( g_{\alpha\beta} \) of \( E^{*} \) will be block upper-triangular matrices of the form:
\[
g_{\alpha\beta} = \begin{pmatrix}
g_{\alpha\beta}^{(1)} & \xi_{12}^{(\alpha\beta)} & \xi_{13}^{(\alpha\beta)} & \cdots & \xi_{1l}^{(\alpha\beta)} \\
0 & g_{\alpha\beta}^{(2)} & \xi_{23}^{(\alpha\beta)} & \cdots & \xi_{2l}^{(\alpha\beta)} \\
0 & 0 & g_{\alpha\beta}^{(3)} & \cdots & \xi_{3l}^{(\alpha\beta)} \\
\vdots & \vdots & \vdots & \ddots & \vdots \\
0 & 0 & 0 & \cdots & g_{\alpha\beta}^{(l)}
\end{pmatrix},
\]
where \( g_{\alpha\beta}^{(i)} \) are the transition functions of \( V_i / V_{i-1} \), and \( \xi_{ij}^{(\alpha\beta)} \) represents the gluing data of the extension.

To construct the deformation, we introduce a parameter \( t \) (the coordinate on $\mathbb{C}$) and modify the transition functions to be
\[
g_{\alpha\beta}(t) = \begin{pmatrix}
g_{\alpha\beta}^{(1)} & t \xi_{12}^{(\alpha\beta)} & t^2 \xi_{13}^{(\alpha\beta)} & \cdots & t^{l-1} \xi_{1l}^{(\alpha\beta)} \\
0 & g_{\alpha\beta}^{(2)} & t \xi_{23}^{(\alpha\beta)} & \cdots & t^{l-2} \xi_{2l}^{(\alpha\beta)} \\
0 & 0 & g_{\alpha\beta}^{(3)} & \cdots & t^{l-3} \xi_{3l}^{(\alpha\beta)} \\
\vdots & \vdots & \vdots & \ddots & \vdots \\
0 & 0 & 0 & \cdots & g_{\alpha\beta}^{(l)}
\end{pmatrix}.
\]

These transition functions define a holomorphic vector bundle \( \mathcal{E} \) over \( C \times \mathbb{C} \), with the following properties
\begin{itemize}
    \item For \( t \neq 0 \), we get back our $E^{*}$ on $C\times \{t\}$.
    \item At \( t = 0 \), all the off-diagonal terms vanish, and the transition functions reduce to a block-diagonal form
    \[
    g_{\alpha\beta}(0) = \begin{pmatrix}
    g_{\alpha\beta}^{(1)} & 0 & 0 & \cdots & 0 \\
    0 & g_{\alpha\beta}^{(2)} & 0 & \cdots & 0 \\
    0 & 0 & g_{\alpha\beta}^{(3)} & \cdots & 0 \\
    \vdots & \vdots & \vdots & \ddots & \vdots \\
    0 & 0 & 0 & \cdots & g_{\alpha\beta}^{(k)}
    \end{pmatrix},
    \]
    which corresponds to the direct sum bundle \( \operatorname{Gr}(E^{*}) = \bigoplus_{i=1}^{k} V_i / V_{i-1} \) which is polystable.
\end{itemize}

On $\mathcal{P}(\mathcal{E})$, the projective bundle of lines of $\mathcal{E}$, we can consider line bundles $\mathcal{O}_{\mathcal{P}(\mathcal{E})}(a) \otimes \varphi^{*}L$, where $\varphi$ is the natural morphism from $\mathcal{P}(\mathcal{E})\rightarrow C.$ and $L$ is an ample line bundle on $C$ of degree $b$. When $a>0$ and $\frac{-b}{a}<\mu=\mu(E),$ the line bundle is relatively ample over $C.$ This line bundle restricts to the same line bundle on $C$ over any $t\in \mathbb{C}.$ When we restrict the line bundle on the central fiber, we have a CSC Kähler metric in the corresponding Kähler class because of the following theorem from \cite{vestigauduchoncurve2011,VestiJulien2019,ross2006obstruction}.

\begin{theorem}
Let \( V \) be a holomorphic vector bundle on a smooth curve $C$ and let \( X = \mathbb{P}(V) \to C \) be its projectivization. The following three conditions are equivalent
\begin{enumerate}
    \item[(i)] \( X \) admits a CSC Kähler metric in any class \( c_1(L) \).
    \item[(ii)] \( X \) is K-polystable for any polarization \( L \).
    \item[(iii)] \( V \)  is polystable, i.e., decomposes as the sum of stable bundles of the same slopes.
\end{enumerate}
\end{theorem}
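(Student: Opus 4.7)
The plan is to prove the three equivalences by arranging them in the cycle (iii) $\Rightarrow$ (i) $\Rightarrow$ (ii) $\Rightarrow$ (iii), using the fact that on a projective bundle over a curve the relevant analytic and algebraic stability notions can be read off the underlying vector bundle.

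For the implication (iii) $\Rightarrow$ (i), I would invoke the Narasimhan--Seshadri theorem: if $V$ is polystable, then $V = \bigoplus_{i} V_{i}$ with each $V_{i}$ stable of the same slope, and each $V_{i}$ carries a Hermite--Einstein metric. Summing these gives a Hermite--Einstein metric on $V$. The classical momentum construction on $\mathbb{P}(V)$ (as developed by Apostolov--Calderbank--Gauduchon and by Hwang--Singer) then produces, from such a metric together with an arbitrary Kähler metric on $C$ in the class determined by $L$, a CSC Kähler metric on $\mathbb{P}(V)$ in the prescribed polarization class. This step works uniformly in the polarization because the construction is explicit and compatible with rescaling the fibre class.

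For (i) $\Rightarrow$ (ii), I would appeal to the fact that the existence of a CSC Kähler metric in $c_{1}(L)$ implies K-polystability of $(X,L)$; this is a combination of Stoppa's theorem (with the refinements of Berman--Darvas--Lu and Chen--Cheng extending matters to K-polystability in the full generality needed here). Since the hypothesis is that every polarization admits a CSC representative, K-polystability holds for every polarization.

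The main obstacle, and the step I would spend the most care on, is (ii) $\Rightarrow$ (iii). Here I would argue by contrapositive: assume $V$ is \emph{not} polystable. Then either the Harder--Narasimhan filtration is non-trivial, producing a subbundle $V' \subset V$ with $\mu(V') > \mu(V)$, or $V$ is semistable but the Jordan--Hölder filtration is not split, so there is a subbundle $V'$ with $\mu(V') = \mu(V)$ that is not a direct summand. In both cases I would construct an explicit test configuration for $(X,L)$ from $V'$ using precisely the deformation-to-the-associated-graded that the paper writes down just above the theorem: the transition functions $g_{\alpha\beta}(t)$ degenerate $V$ to $\mathrm{Gr}(V)$ as $t \to 0$, giving an algebraic family $\mathcal{P}(\mathcal{E}) \to \mathbb{C}$, on which the natural $\mathbb{C}^{\ast}$-action (rescaling $t$) produces a test configuration with the line bundle $\mathcal{O}_{\mathcal{P}(\mathcal{E})}(a)\otimes \varphi^{\ast}L$. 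I would then compute its Donaldson--Futaki invariant via intersection theory on the total space; by Ross--Thomas slope-theoretic considerations, this invariant turns out to equal (up to a positive constant) a multiple of $\mu(V) - \mu(V')$, which is $<0$ in the strictly unstable case, and the non-split semistable case produces a non-product test configuration with vanishing Donaldson--Futaki invariant, violating K-polystability. The delicate part is verifying that the test configuration is non-trivial (in particular, that the central fibre is not isomorphic to the general fibre, which uses the non-splitness of the extension) and that the intersection-theoretic computation has the claimed sign; this is where the argument most heavily uses the curve hypothesis, since on higher-dimensional bases additional correction terms appear.
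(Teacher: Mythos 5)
The paper does not actually prove this statement: it is quoted as a known theorem, with the three implications attributed to the cited works (Apostolov--Calderbank--Gauduchon--T{\o}nnesen-Friedman for the existence direction over a curve, Ross--Thomas \cite[Theorem 5.13]{ross2006obstruction} and \cite[Remark 1.1]{VestiJulien2019} for the instability direction), so there is no internal proof to compare against. Your cycle (iii) $\Rightarrow$ (i) $\Rightarrow$ (ii) $\Rightarrow$ (iii) is the standard route taken in those references and is correct in outline, with three caveats. First, for (iii) $\Rightarrow$ (i) the mechanism over a curve is not really the Hwang--Singer/ACG momentum construction (which handles admissible, typically non-polystable, bundles): polystability plus Narasimhan--Seshadri gives a Hermite--Einstein, hence projectively flat, metric on $V$, so $\mathbb{P}(V)$ is a flat $\mathbb{P}^{n-1}$-bundle and carries local-product CSC metrics; one then checks that varying the fibre and base scales sweeps out every K\"ahler class $a\xi+bf$. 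Second, for (i) $\Rightarrow$ (ii), Stoppa's theorem alone is not enough since $\mathbb{P}(V)$ generally has continuous automorphisms (e.g.\ when $V$ splits); you need the Berman--Darvas--Lu result (the paper's \cite{BDLregularity2020}) to get K-polystability, which you do mention, so this is fine but the burden rests entirely on that result. Third, in (ii) $\Rightarrow$ (iii) the two Donaldson--Futaki computations you defer are precisely where the content lies: that the degeneration of $V$ to $V'\oplus V/V'$ (or to $\mathrm{Gr}(V)$) yields $\mathrm{DF}$ proportional, with a positive polarization-dependent factor, to $\mu(V')-\mu(V/V')$, so that it is negative in the unstable case and zero in the strictly semistable case, and that the central fibre $\mathbb{P}(\mathrm{Gr}(V))$ is not isomorphic to $\mathbb{P}(V)$ when the Jordan--H\"older filtration does not split (which holds because $\mathbb{P}(W)\cong\mathbb{P}(V)$ forces $W\cong V\otimes M$ for a line bundle $M$, and twisting preserves non-splitness). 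These verifications are exactly the Ross--Thomas slope computation and are not reproduced in your sketch, so as written your argument is an accurate reconstruction of the known proof strategy rather than a complete proof.
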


This tells us that the Kähler class of $\mathcal{O}_{\PE}(a)\otimes p^{*}L$, where $p:X = \PE\rightarrow C$ is the natural projection, is analytically semistable (\cite{dervan2018moduli}), thus  $\mathcal{O}_{\PE}(a)\otimes p^{*}L$ is K-semistable (also from \cite{dervan2018moduli}). Also notice that if $E$ is not polystable(semistable), none of the ample line bundle on $\PE$ can be K-polystable(K-semistable) as explained in \cite[Theorem 5.13]{ross2006obstruction} and \cite[Remark 1.1]{VestiJulien2019}. 

We conclude as follows.

\begin{theorem}\label{semiksemi}
    For a vector bundle $E$ over a smooth curve $C$. The following are equivalent

    \begin{enumerate}
        \item[(i)] $E$ is semistable but not polystable.
        \item[(ii)] every ample line bundle on $X=\PE$ is K-semistable but not K-polystable.
        \item[(iii)] one ample line bundle on $X=\PE$ is K-semistable but not K-polystable.
    \end{enumerate}
\end{theorem}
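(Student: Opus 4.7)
The plan is to prove the chain of implications (i)$\Rightarrow$(ii)$\Rightarrow$(iii)$\Rightarrow$(i), with the middle step trivial. Both nontrivial directions amount to repackaging the three ingredients already assembled in the excerpt: the explicit one-parameter degeneration $\mathcal{E}\to C\times\mathbb{C}$ taking $E^{*}$ (for $t\neq 0$) to the polystable graded bundle $\operatorname{Gr}(E^{*})$ (at $t=0$), the CSC Kähler existence theorem of Apostolov-Gauduchon-Keller and Ross-Thomas for projectivizations of polystable vector bundles over a curve, and the Ross-Thomas slope obstruction for non-semistable $E$.

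For (i)$\Rightarrow$(ii), I would fix an ample $L$ on $X=\PE$ numerically equivalent to $aH_{E}+bF$ (with $a>0$ and $-b/a<\mu(E)$) and lift it to the relatively ample line bundle $\mathcal{L}=\mathcal{O}_{\mathcal{P}(\mathcal{E})}(a)\otimes\varphi^{*}L_{0}$ on the total space of the deformation, where $L_{0}$ is a line bundle of degree $b$ on $C$. The central fiber is $\mathcal{P}(\operatorname{Gr}(E^{*}))$ with $\operatorname{Gr}(E^{*})$ polystable of slope $\mu(E)$, so the cited theorem produces a CSC Kähler metric in the Kähler class of $\mathcal{L}|_{t=0}$. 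Hence that class is analytically K-polystable, and in particular the corresponding class on nearby fibers is analytically K-semistable by the deformation statement of \cite{dervan2018moduli}; since every fiber over $t\neq 0$ is isomorphic to $(X,L)$, the original $L$ is K-semistable. Failure of K-polystability is handled directly by \cite[Theorem 5.13]{ross2006obstruction}: the nontrivial filtration $0\subsetneq V_{1}\subsetneq E^{*}$ furnished by non-polystability (together with semistability, so equal slopes) provides a deformation to the normal cone test configuration that is nontrivial yet has Donaldson-Futaki invariant zero, so $L$ is not K-polystable.

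For (iii)$\Rightarrow$(i) I would argue by contrapositive on the two ways (i) can fail. If $E$ is not semistable, a slope-destabilizing subbundle produces, via the construction of \cite[Theorem 5.13]{ross2006obstruction} (see also \cite[Remark 1.1]{VestiJulien2019}), a test configuration on $\mathcal{P}(E^{*})$ with strictly negative Donaldson-Futaki invariant for every ample polarization, obstructing K-semistability of every ample $L$ and hence contradicting (iii). If instead $E$ is polystable, the Apostolov-Gauduchon-Keller-Ross-Thomas theorem gives a CSC Kähler metric in every Kähler class on $\PE$, so every ample line bundle is K-polystable and again (iii) fails. In either case (i) must hold.

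The main delicate point is the invocation of \cite{dervan2018moduli} to transmit the analytic K-semistability from the central fiber of the deformation to its generic fiber: one needs that the existence of a CSC metric at $t=0$ in the Kähler class of $\mathcal{L}|_{t=0}$ forces K-semistability of $\mathcal{L}|_{t}$ for $t$ in a punctured neighborhood. Modulo this openness (and the already cited obstruction theorem of Ross-Thomas for the converse directions), the proof is a matter of assembly.
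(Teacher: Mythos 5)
Your proposal is correct and follows essentially the same route as the paper: the explicit degeneration of $E^{*}$ to $\operatorname{Gr}(E^{*})$, the CSC existence theorem for projectivizations of polystable bundles over a curve on the central fiber, the analytic-semistability-implies-K-semistability statement of \cite{dervan2018moduli}, and the Ross--Thomas slope obstruction (\cite[Theorem 5.13]{ross2006obstruction}, \cite[Remark 1.1]{VestiJulien2019}) for the non-polystable and non-semistable directions. You merely organize the argument as the chain (i)$\Rightarrow$(ii)$\Rightarrow$(iii)$\Rightarrow$(i), which is a clean packaging of exactly the ingredients the paper assembles.
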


\begin{remark}
    Clearly we cannot get Theorem \ref{semiksemi} by using the method in Example \ref{exksemi}. This suggests that the sufficient conditions for the existence of CSC Kähler metric we have at the moment are rather weak. We might need to search for better sufficient conditions by taking into more consideration of the geometry of the variety. One reason for which Proposition \ref{suffikss} is not strong enough is that it depends too much the positivity of $K_{X}$. If $X$ is a fibration over some base, $K_{X}$ can easily have different positivity behaviours at horizontal and vertical directions. It seems more reasonable to try to distinguish these two directions instead of just using $K_{X}$. We leave this to further exploration.
\end{remark}

\begin{remark}
    We want to mention the recent paper of Ortu and Sektnan \cite{ortu2024constant}. In this paper, they also used the deformation argument. They find that, contrary to our cases, when $E$ is a semistable vector bundle with respect to some polarization over a higher dimensional projective manifold, it is possible to find CSC Kähler metrics inside some $c_{1}(L)$ for some ample line bundles on $\PE$. Notice also that the existence of CSC Kähler metrics inside $c_{1}(L)$ implies the K-polystability of $(X,L)$ by \cite{BDLregularity2020}.    
\end{remark}

\bibliographystyle{plain}
\bibliography{References}
\nocite{*}

\end{CJK}

\end{document}